\newtheorem{theorem}{Theorem}
\newtheorem{corollary}[theorem]{Corollary}
\newtheorem{lemma}[theorem]{Lemma}
\renewcommand{\S}{\Sigma }
\newcommand{\D}{\mathscr{D}}
\newcommand{\Z}{\mathbb{Z}}
\newcommand{\bb}{\beta}
\renewcommand{\aa}{\alpha}
\newcommand{\cc}{\gamma}
\newcommand{\tc}{\tau\gamma}
\newcommand{\ta}{\tau\alpha}
\newcommand{\tb}{\tau\beta}
\newcommand{\pD}{\pi(\D)}
\newcommand{\aD}{\mathscr{A}(\D)}
\newcommand{\MM}{M_1\#M_2}
\begin{document}

\title{On the subadditivity of Montesinos complexity of closed orientable
3-manifolds}

\author{\'Alvaro Lozano, Rub\'en Vigara \\
alozano@unizar.es, rvigara@unizar.es\\
+34 976 739 856, +34 976 739 769\\
Centro Universitario de la Defensa - Zaragoza\\
I.U.M.A. - Universidad de Zaragoza}

\maketitle

\begin{abstract}
A filling Dehn sphere $\Sigma$ in a closed 3-manifold $M$ is
a sphere transversely immersed in $M$ that defines a cell decomposition of
$M$. Every closed 3-manifold has a filling
Dehn sphere \cite{Montesinos}. The Montesinos complexity of a $3$-manifold $M$
is defined as the minimal number of triple points among all the filling Dehn
spheres of $M$. A sharp upper bound for the Montesinos complexity of the
connected sum of two 3-manifolds is given.

\emph{Keywords}: 3-manifold, immersed surface, filling Dehn sphere, triple
points, complexity of 3-manifolds.

Math. Subject Classification: 57N10, 57N35
\end{abstract}

\section{Introduction}\label{sec:Introduction}

Throughout the paper all $3$-manifolds are assumed
to be closed, that is, compact, connected and without boundary, and orientable.

Let $M$ be a $3$-manifold.

A \emph{Dehn sphere} in $M$ is $2$-sphere transversely immersed in $M$, and thus having only double point and triple point singularities. A Dehn sphere in $M$ is \emph{filling} if it naturally
defines a cell decomposition of $M$ (see Section
\ref{sec:Dehn-spheres-Johansson-diagrams} for details).  Following
\cite{Haken1}, in \cite{Montesinos} it is proved that every closed, orientable
3-manifold has a filling Dehn sphere, and filling Dehn spheres and their
Johansson diagrams are proposed as a suitable way for representing all closed,
orientable 3-manifolds. A weaker version of filling Dehn spheres are the so
called \emph{quasi-filling Dehn spheres} in the notation introduced in
\cite{Amendola02}. A quasi-filling Dehn sphere in $M$ is a
Dehn sphere whose complementary set in $M$ is a disjoint union of open
$3$-balls. In \cite{FennRourke} it is proved that every $3$-manifold has a
quasi-filling Dehn sphere.

A simple check using Euler characteristics shows that the number of triple
points of a Dehn sphere in $M$ is always even. The filling Dehn sphere $\S$ in
$M$ is \emph{minimal} if there is no 
filling Dehn sphere in $M$ with less triple points than $\S$. We define the 
\emph{Montesinos complexity} of $M$, $mc(M)$, as the number of triple points of 
a minimal filling Dehn sphere of $M$.

Montesinos complexity has been introduced 
in~\cite{RHomotopies} with a different name (see Section \ref{sec:comments}),
and it is closely related with G. Amendola's 
\emph{surface-complexity $sc(M)$} introduced in~\cite{Amendola02}.

Surface-complexity is subadditive under connected sums, that is, 
\[
  sc(M_1\# M_2) \leq sc(M_1)+sc(M_2),
\]
were $M_1\# M_2$ denotes the connected sum of the $3$-manifolds $M_1$ and $M_2$.

Unlike in the previous case, Montesinos complexity is not subadditive.

\begin{theorem}
  \label{thm:subadditivity-2}
  For any $3$-manifolds $M_{1}$ and $M_{2}$ we have
  \[
    mc(M_1\#M_2)\leq mc(M_1)+mc(M_2)+2\,. 
  \]
\end{theorem}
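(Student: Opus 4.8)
The plan is to build a filling Dehn sphere for $M_1 \# M_2$ by combining filling Dehn spheres for the summands, localizing the connected-sum surgery to a single tetrahedral region of one of the cell decompositions. Start with minimal filling Dehn spheres $\Sigma_1 \subset M_1$ and $\Sigma_2 \subset M_2$, with $mc(M_1)$ and $mc(M_2)$ triple points respectively. Each $\Sigma_i$ cuts $M_i$ into open $3$-balls, and near a triple point the local picture consists of three sheets meeting like the coordinate planes, with eight octant-corners of complementary $3$-cells. The connected sum is formed by removing an open ball from each $M_i$ and gluing along the resulting boundary spheres; the geometric heart of the argument is to choose these balls compatibly with the cell structure so that the two diagrams can be spliced across the gluing sphere while keeping control of the triple-point count.

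First I would set up the local model for the connected-sum operation on filling Dehn spheres. The natural choice is to pick the removed ball inside a complementary $3$-cell of each $\Sigma_i$, but a ball disjoint from the sphere would not interact with the cell structure and would leave the gluing region non-filling. Instead the plan is to select, in each $M_i$, a small ball centered at a point of $\Sigma_i$ lying on a \emph{double curve} (away from any triple point), where $\Sigma_i$ locally looks like two transverse sheets — equivalently four quarter-sheets bounding four complementary corners. Removing such a ball and gluing the two punctured pieces, one must interpolate between the two local diagrams on the gluing $2$-sphere. The key step is to exhibit an explicit local gluing pattern — essentially a small "bridge" connecting the two spheres into a single immersed sphere — and to verify that the resulting surface is again filling, i.e. that every complementary region is an open $3$-ball and the $1$- and $0$-skeleton conditions of a cell decomposition hold.

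The accounting of triple points is what forces the additive constant $+2$. In the spliced diagram the triple points of $\Sigma_1$ and those of $\Sigma_2$ all survive, contributing $mc(M_1) + mc(M_2)$, and the local bridge that repairs the two double-curve regions into a filling configuration must create exactly two new triple points in order for the complementary pieces in the gluing region to be cells rather than more complicated handlebodies. I would make this precise by drawing the bridge so that the two double arcs from $\Sigma_1$ and the two from $\Sigma_2$ are joined through a standard tangle whose only singularities are a pair of triple points, and then checking the Euler-characteristic / cell-count bookkeeping; recall from the introduction that the parity of the triple-point count is automatically even, which is consistent with adding $2$.

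The main obstacle will be the filling verification for the spliced sphere: it is easy to glue two immersed spheres into a connected immersed sphere realizing the connected sum of the ambient manifolds, but it is delicate to guarantee that \emph{no} complementary region becomes something other than an open ball — for instance that the gluing does not merge two balls along an annulus or create a region with nontrivial topology. I expect to handle this by keeping the bridge as small and as standard as possible, so that the only complementary cells affected are the four corners near each chosen double-curve point, and by exhibiting an explicit local model in which these eight corners reorganize into the correct number of balls. Once the local model is fixed, the global filling property and the triple-point count both follow from the fact that everything outside a small ball is unchanged from the original filling diagrams $\Sigma_1$ and $\Sigma_2$.
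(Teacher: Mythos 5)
Your overall strategy --- splice the two minimal filling spheres together inside the connected-sum region and pay for the restored fillingness with exactly two new triple points --- is the same as the paper's, but the specific local model you propose fails, and it fails precisely at the Euler-characteristic bookkeeping you defer. If you center the surgery ball at a point of a double curve, the ball meets $\Sigma_i$ in two transversely crossing disks, so in the domain $2$-sphere of the parametrization you delete \emph{two} disjoint disks; each punctured $\Sigma_i$ is then an annulus ($\chi=0$) whose trace on the gluing sphere consists of two circles. Any connected bridge joining the resulting four boundary circles is a surface with four boundary components, hence has $\chi=-2-2g\leq -2$, and the natural choice of two crossing tubes (one per sheet) has $\chi=0$; in either case the spliced closed surface has $\chi\leq 0$, so it is a Dehn torus (or a higher-genus Dehn surface), not a Dehn sphere, and the quantity you end up bounding is no longer $mc(M_1\#M_2)$. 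The only way to rescue a sphere with this choice of surgery points is to cap off one of the two deleted sheets on each side with a disk and tube only the remaining pair, and then the double curves no longer connect up through the bridge in the way your two-triple-point tangle requires; none of this is addressed in the proposal.

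The paper avoids the problem by taking the balls $B_1,B_2$ \emph{disjoint} from $\Sigma_1$ and $\Sigma_2$, so that after the connected sum the two spheres sit disjointly in $M_1\#M_2$ with an $S^2\times I$ between them. A single nonsingular disk is removed from each sphere and the two are joined by one embedded tube (a \emph{piping}); this keeps $\chi=1+1+0=2$, so $\Sigma_1\#\Sigma_2$ is still a Dehn sphere, and it is quasi-filling. The price of piping through nonsingular disks is that the tube creates a face of $\Sigma_1\#\Sigma_2$ minus its singular set which is an open annulus, violating condition \textbf{(F2)}; this is repaired by pushing two fingers of the surface along the tube until they cross each other, which cuts the annular face into disks, restores the cell decomposition, and creates exactly the two extra triple points. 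So the correct order of ideas is: pipe at nonsingular points first (to preserve the genus), and let the two new triple points come from the fingers that fix fillingness afterwards, rather than trying to route the existing double curves through the gluing region.
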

The aim of this paper is to prove Theorem \ref{thm:subadditivity-2}, and that
the upper bound given there is sharp.

The proof of Theorem~\ref{thm:subadditivity-2} relies on a surgery operation, 
similar to the one developed in~\cite{RFenn}, which will be described in 
Section~\ref{sec:surgery}.

A filling Dehn sphere, and its Johansson diagram, 
provides a presentation of the fundamental group of the filled 3-manifold. We 
describe this presentation in Section~\ref{sec:diagram-group}.
In Section~\ref{sec:complexity-4} we briefly analyze the case of filling 
Dehn spheres with at most $4$ triple points, proving the following theorem:

\begin{theorem}
  \label{thm:H1-complexity-4}
  If $\S$ is a filling Dehn sphere of $M$ with at most $4$ triple points, then 
  the first homology group $H_1(M,\Z)$ cannot be isomorphic to $\Z_3\oplus\Z_3$.
\end{theorem}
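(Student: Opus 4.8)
The plan is to read off a presentation of $\pi_1(M)$, hence of $H_1(M,\Z)$, from the cell decomposition induced by $\S$, and to show that the resulting abelian group can never be $\Z_3\oplus\Z_3$ when the number $t$ of triple points is at most $4$. First I would record the combinatorics of the decomposition: at each triple point three double curves cross, contributing six edge-ends, so there are $V=t$ vertices and $E=3t$ edges; lifting the double-curve set to the domain sphere and applying Euler's formula gives $F=3t+2$ faces, and then $\chi(M)=0$ forces $C=t+2$ three-cells. Since the $3$-cells are attached along $2$-spheres, they affect neither $\pi_1$ nor $H_1$, so $H_1(M,\Z)=H_1(\S)$, where $\S$ is regarded as the $2$-complex carrying these $t$, $3t$ and $3t+2$ cells. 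Using the presentation of Section~\ref{sec:diagram-group} this yields $\pi_1(M)=\langle x_1,\dots,x_{2t+1}\mid R_1,\dots,R_{3t+2}\rangle$, so that $H_1(M,\Z)$ is the cokernel of the abelianized relation matrix $A$, an integer matrix of size $(2t+1)\times(3t+2)$; since each double-curve edge lies in the boundary of at most four faces, $A$ is sparse with entries of small absolute value.

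Next I would translate the target into numerical conditions on $A$. Writing $m=2t+1$, the group $H_1$ is finite exactly when $A$ has rank $m$ over $\Q$, and then its order equals $d_m$, the $\gcd$ of the $m\times m$ minors of $A$; moreover $H_1\cong\Z_3\oplus\Z_3$ if and only if $d_m=9$ and $d_{m-1}=3$, equivalently $|H_1|=9$ together with $\dim_{\mathbb{F}_3}H_1(M,\mathbb{F}_3)=2$. The last condition says that the rank of $A$ drops by exactly two upon reduction modulo $3$. The virtue of this reformulation is its rigidity: producing order exactly $9$ with a rank defect of $2$ modulo $3$ requires two essentially independent relations that each wind three times along the double-curve graph, which is expensive in terms of faces and edges.

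Since the number of triple points is even and a filling Dehn sphere must possess at least one vertex, only $t=2$ and $t=4$ occur. For $t=2$ the diagram has $6$ crossings and $8$ regions, and a direct inspection of the few admissible Johansson diagrams shows that the resulting manifolds all have cyclic first homology (indeed $M$ is one of $S^3$, $\mathbb{RP}^3$ or $L(3,1)$), so $H_1\neq\Z_3\oplus\Z_3$. The case $t=4$ is the heart of the argument: here $A$ has size $9\times14$, and one must run through the Johansson diagrams with $4$ triple points and compute $H_1$ in each case. I would organise this enumeration by the isotopy type of the double-curve graph on the sphere, up to the symmetries of the diagram, and prune it aggressively with the constraints of the previous paragraph, discarding every candidate that fails either to give a finite group, to give order $9$, or to leave a rank defect of $2$ modulo $3$.

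The main obstacle I anticipate is controlling the $t=4$ enumeration: the number of $4$-valent sphere graphs with $12$ crossings is large, and each candidate must first be verified to be genuinely \emph{filling} before its homology becomes relevant. The algebraic reformulation above is precisely what makes the verification tractable, since ruling out $\Z_3\oplus\Z_3$ requires only detecting the simultaneous failure of the two modulo-$3$ rank conditions, rather than identifying the homeomorphism type of each candidate manifold.
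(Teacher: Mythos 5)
Your proposal correctly sets up the problem (the cell counts $V=t$, $E=3t$, $F=3t+2$ are right, as is the Smith-normal-form characterization of $\Z_3\oplus\Z_3$ via the order being $9$ together with a rank defect of $2$ modulo $3$), but it does not contain a proof: the case $t=4$, which you yourself identify as the heart of the matter, is deferred to an enumeration of all filling Johansson diagrams with $4$ triple points that is never carried out, and whose feasibility is exactly what is in question. Working from the cell decomposition you get a presentation with $2t+1=9$ generators and $14$ relators and propose to range over $4$-valent sphere graphs with $12$ crossings; without further structural input this search space is enormous, and nothing in your write-up reduces it to something checkable. The paper avoids this entirely by exploiting three structural facts that are absent from your proposal: (i) a connected diagram with $p$ triple points has at most $(2+3p)/4$ double curves, so for $p\le 4$ the diagram group $\pD$ has at most \emph{three} generators (one per double curve, since sister curves are inverse to each other); (ii) each triple point relator has one of five rigid shapes according to how many distinct double curves meet there, abelianizing to $\aa+\bb+\cc=0$, $\bb=0$, $\bb=\pm 2\aa$, $\aa=0$ or $3\aa=0$; and (iii) a separate combinatorial lemma, using the even intersection property, showing that with three double curves and four triple points no type I triple point can occur. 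With these in hand the paper shows by a short case analysis that the abelianized diagram group is cyclic, free abelian of rank two, or contains $2$-torsion --- none of which is $\Z_3\oplus\Z_3$. Your linear-algebra reformulation is a reasonable pruning criterion but it does no exclusion by itself.

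Two smaller points. First, your parenthetical classification for $t=2$ is wrong: the paper exhibits a filling Dehn sphere of $S^2\times S^1$ with two triple points, so the list ``$S^3$, $\mathbb{R}P^3$ or $L(3,1)$'' is at best incomplete; fortunately only cyclicity of $H_1$ is needed there, and that follows from the two-double-curve bound. Second, before homology is even relevant each candidate diagram must be shown to be realizable and filling, as you note; verifying this for thousands of abstract diagrams is itself nontrivial, which is another reason the generator-count reduction, rather than a graph enumeration, is the right first move.
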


It is known that the lens space $L(3,1)$ has Montesinos complexity $2$
(cf.~\cite{Rthesis}). Thus, by Theorem \ref{thm:subadditivity-2},
$mc(L(3,1)\#L(3,1))\leq 6$.
The first homology group of $L(3,1)\#L(3,1)$ is known to be isomorphic to
$\Z_3\oplus\Z_3$, but
by Theorem \ref{thm:H1-complexity-4}, it cannot be $mc(L(3,1)\#L(3,1))=4$.
Therefore,

\begin{theorem}
  \label{thm:L31-L31-complexity-6}
  The Montesinos complexity of $L(3,1)\#L(3,1)$ is $6$.
\end{theorem}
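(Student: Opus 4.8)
The plan is to combine the two main theorems of the paper with two facts about $L(3,1)$ that are quoted as known: its Montesinos complexity equals $2$ (cf.~\cite{Rthesis}), and $H_1(L(3,1)\#L(3,1);\Z)\cong\Z_3\oplus\Z_3$. The argument splits into an upper bound obtained from subadditivity and a matching lower bound obtained from the homological obstruction of Theorem~\ref{thm:H1-complexity-4}, with the parity of triple points doing the final bookkeeping.

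First I would establish the upper bound. Applying Theorem~\ref{thm:subadditivity-2} with $M_1=M_2=L(3,1)$ and substituting $mc(L(3,1))=2$ gives directly
\[
  mc(L(3,1)\#L(3,1))\leq 2+2+2 = 6.
\]
This is the only place the subadditivity estimate enters, and it already singles out $6$ as the candidate value.

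For the lower bound I would use the parity observation recalled in the introduction: the number of triple points of any Dehn sphere in a closed orientable $3$-manifold is even. Hence $mc(L(3,1)\#L(3,1))$ is an even integer, and by the upper bound it lies in $\{0,2,4,6\}$. To exclude the three smaller values at once I would invoke Theorem~\ref{thm:H1-complexity-4}: any filling Dehn sphere with at most $4$ triple points fills a manifold whose first homology group is \emph{not} isomorphic to $\Z_3\oplus\Z_3$. Since $H_1(L(3,1)\#L(3,1);\Z)\cong\Z_3\oplus\Z_3$, no filling Dehn sphere of $L(3,1)\#L(3,1)$ can have $0$, $2$, or $4$ triple points, so $mc(L(3,1)\#L(3,1))>4$. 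Combined with the parity constraint this forces $mc(L(3,1)\#L(3,1))\geq 6$, and together with the upper bound we conclude $mc(L(3,1)\#L(3,1))=6$.

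The genuine mathematical content lives entirely in Theorems~\ref{thm:subadditivity-2} and \ref{thm:H1-complexity-4}, which I am free to assume here, so no real obstacle remains in this final step. The only point requiring care is the interplay with parity: Theorem~\ref{thm:H1-complexity-4} only yields the strict inequality $mc>4$, and it is the evenness of the triple point count that upgrades this to $mc\geq 6$ and thereby makes it meet the upper bound exactly. I would also make explicit that both quoted inputs—the value $mc(L(3,1))=2$ and the homology computation $H_1(L(3,1)\#L(3,1);\Z)\cong\Z_3\oplus\Z_3$—are standard and independent of the constructions in this paper.
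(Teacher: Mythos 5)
Your proposal is correct and follows essentially the same route as the paper: the upper bound comes from Theorem~\ref{thm:subadditivity-2} with $mc(L(3,1))=2$, and the lower bound from Theorem~\ref{thm:H1-complexity-4} applied to $H_1(L(3,1)\#L(3,1);\Z)\cong\Z_3\oplus\Z_3$. You are in fact slightly more explicit than the paper, which only states that $mc=4$ is excluded, whereas you spell out that Theorem~\ref{thm:H1-complexity-4} rules out all values at most $4$ and that the parity of the triple point count then forces $mc\geq 6$.
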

 \begin{corollary}
  \label{cor:sharp-bound}
  The upper bound of Theorem~\ref{thm:subadditivity-2} is sharp.
\end{corollary}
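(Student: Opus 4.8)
The plan is to establish sharpness by exhibiting a single pair of $3$-manifolds for which the inequality of Theorem~\ref{thm:subadditivity-2} is an equality; producing one such pair shows that the additive term $+2$ cannot be replaced by any smaller constant, which is exactly what ``sharp'' means here. The obvious candidate is $M_1=M_2=L(3,1)$, since its Montesinos complexity is the smallest nonzero value available and has already been computed.

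First I would record the right-hand side of the bound for this choice. Using $mc(L(3,1))=2$ (cf.~\cite{Rthesis}), Theorem~\ref{thm:subadditivity-2} gives
\[
  mc(L(3,1)\#L(3,1))\leq mc(L(3,1))+mc(L(3,1))+2=2+2+2=6.
\]
Next I would invoke the exact computation of the left-hand side: Theorem~\ref{thm:L31-L31-complexity-6} asserts that $mc(L(3,1)\#L(3,1))=6$. Comparing the two, equality holds in Theorem~\ref{thm:subadditivity-2} for $M_1=M_2=L(3,1)$, which is precisely the assertion that the bound is sharp.

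The genuine content of this argument is not in the corollary but in the lower bound $mc(L(3,1)\#L(3,1))\geq 6$ packaged by Theorem~\ref{thm:L31-L31-complexity-6}. That inequality is the real obstacle, and it is handled upstream: the parity observation forces the number of triple points of any filling Dehn sphere to be even, while Theorem~\ref{thm:H1-complexity-4} rules out $4$ (and \emph{a fortiori} $0$ and $2$) triple points for a manifold with $H_1\cong\Z_3\oplus\Z_3$, such as $L(3,1)\#L(3,1)$. Once those facts are in hand, the corollary is pure bookkeeping, so I would keep its proof to the two-line assembly above.
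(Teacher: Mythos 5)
Your proof is correct and follows exactly the paper's own route: the authors likewise take $M_1=M_2=L(3,1)$, combine $mc(L(3,1))=2$ with Theorem~\ref{thm:subadditivity-2} to get the upper bound $6$, and use Theorem~\ref{thm:H1-complexity-4} (which already covers the cases of $0$, $2$ and $4$ triple points, since it applies to any filling Dehn sphere with \emph{at most} $4$ triple points) to exclude anything smaller, yielding Theorem~\ref{thm:L31-L31-complexity-6} and hence sharpness. Your observation that the corollary itself is pure bookkeeping once the lower bound is in place matches the paper's presentation.
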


We plan to classify all the fundamental groups of the manifolds with Montesinos
complexity up to 4 in a subsequent paper.

\section{Dehn spheres and their Johansson
diagrams}\label{sec:Dehn-spheres-Johansson-diagrams}

We will introduce some basic facts about Dehn spheres and their Johansson
diagrams. We refer to \cite{RHomotopies,Rthesis} for details.

In the following a \emph{curve} in the $2$-sphere $S^2$ or $M$  is the image of
an immersion
from $S^1$ into $S^2$ or $M$, respectively. A \emph{Dehn sphere} in $M$ is a
subset $\S\subset M$ such that there exists a transverse immersion
$f:S^2\rightarrow M$ such that $\S=f(S^2)$ (cf. \cite{Papa}). In this situation
we say that $f$ is a \emph{parametrization} of $\S$.

Let $\S$ be a Dehn sphere in $M$, and consider a parametrization $f$ of $\S$.

The \emph{singularities} of $\S$ are the points $x\in\S$ such that 
$\#f^{-1}(x)>1$. The \emph{singularity set} $S(\S)$ of $\S$ is the set of 
singularities of $\S$. As $f$ is transverse,  the singularities of $\S$ are
arranged along \emph{double curves}, and can be divided into \emph{double
points} ($\#f^{-1}(x)=2$), where two sheets of $\S$ 
intersect transversely, and \emph{triple points} ($\#f^{-1}(x)=3$), where three 
sheets of $\S$ intersect transversely.

Because $S^2$ is compact and without boundary, the double curves of $\S$ are
closed and 
there is a finite number of them. The triple points of $\S$ are isolated and
there is a finite number of them. Following \cite{Shima}, we denote by $T(\S)$
the set of triple points of $\S$.

The preimage under $f$ in $S^{2}$ of the singularity set of $\Sigma$, together
with the information about how its points become identified by $f$ in $\S$ is
the \emph{Johansson diagram} $\D$ of $\S$ (see~\cite{Montesinos}).

Because $S^2$ and $M$ are orientable, the preimage under $f$ of a double 
curve of $\S$ is the union of two different closed curves in $S^2$, and we will 
say that these two curves are \emph{sister curves} of $\D$. Thus, the Johansson 
diagram of $\S$ is composed by an even number of different closed curves in 
$S^2$. Indeed, we will  identify $\D$ with  the  set of different  curves
that 
compose it. For any curve $\aa\in\D$ we denote by $\ta$ the sister curve of 
$\aa$ in $\D$. This defines an involution $\tau:\D\rightarrow\D$ that sends 
each curve of $\D$ into its sister curve of $\D$.

The curves of $\D$ intersect with others or with themselves transversely at the
\emph{double 
points} of $\D$. The double points of $\D$ are the preimage under $f$ of 
the triple points of $\S$. If $P$ is a triple point of $\S$, the 
three double points of $\D$ in $f^{-1}(D)$ compose \emph{the triplet of} $P$ 
(see Figure \ref{fig:triple-point-00}).

\begin{figure}
\centering
\includegraphics[width=0.8\textwidth]{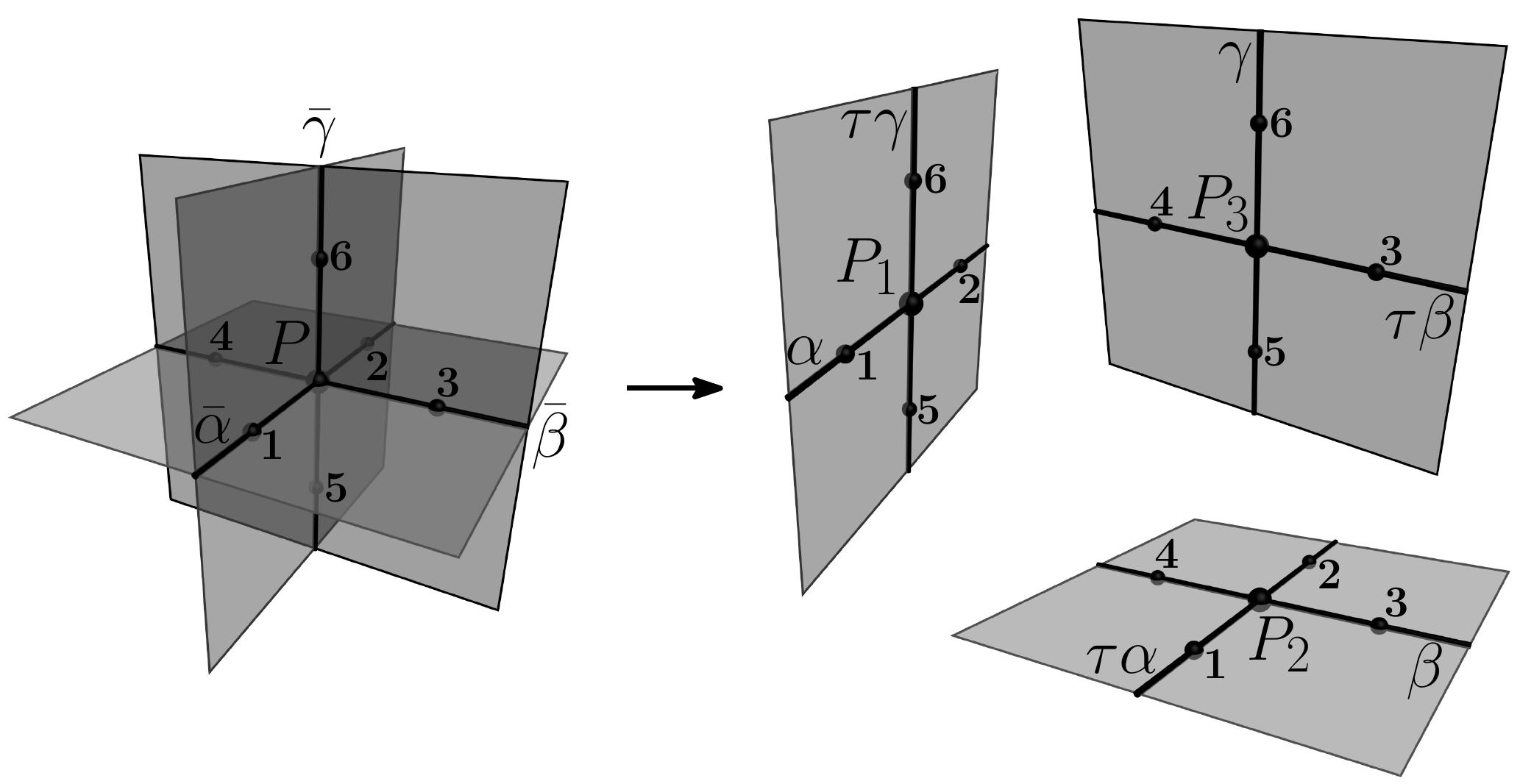}\\
\caption{A triple point of $\S$ and its triplet
in $S^2$}\label{fig:triple-point-00}
\end{figure}

The Dehn sphere $\S$ \emph{fills} $M$ if it defines a cell decomposition of $M$
whose $0$-skeleton is the set of triple points of $\S$, the $1$-skeleton is the
set of singularities of $\S$, and the $2$-skeleton is $\S$ itself. Equivalently,
$\S$ fills $M$ iff
\begin{enumerate}[\bf(F1)]
  \item $S(\S)-T(\S)$ is a disjoint union of open arcs;
  \item $\S-S(\S)$ is a disjoint union of open 2-disks;
  \item $M-\S$ is a disjoint union of open 3-balls.
  \label{F3}
\end{enumerate}
In particular, if $\S$ is filling each double curve must cross at least one
triple point and the 
Johansson diagram of $\S$ must be connected. A weaker version of filling Dehn 
spheres are the quasi-filling Dehn spheres, for 
which only condition~\textbf{(F3)} is required.

If we are given an \emph{abstract diagram}, i.e., an even collection of curves 
in $S^2$ coherently identified in pairs, it is possible to know if this 
abstract diagram is \emph{realizable}: if it is actually the Johansson diagram 
of a Dehn sphere in a $3$-manifold (see~\cite{Johansson1,Johansson2,Rthesis}). 
It is also possible to know if the abstract diagram is \emph{filling}: if it is 
the Johansson diagram of a filling Dehn sphere of a 3-manifold (see 
\cite{Rthesis}). If $\S$ fills $M$, it is possible to build $M$ out of the
Johansson diagram of $\S$. As every $3$-manifold has a filling Dehn sphere,
filling Johansson diagrams represent all closed, orientable $3$-manifolds.

In Figure~\ref{fig:diagrams-2-triple-points} we have depicted the simplest 
Johannson diagrams of filling Dehn spheres. In any case the curves must be 
identified in such a way that double points are identified with double points
and 
the arcs labelled with the same arrow become identified in the obvious way.
The graphs and the arrows give enough information about how all the points of
the diagram must become identified in $\S$. Nevertheless, for clarifying the
pictures we have labelled with the same name the double points that belong to
the same triplet.
The diagram of Figure~\ref{fig:shima_00-Johansson} is the classical diagram of 
I.~Johansson~\cite{Johansson1}. The $3$-sphere $S^3$ has only $3$ (up to
isotopy) filling Dehn spheres with $2$ triple points. They are part of the
A.~Shima's spheres given in \cite{Shima}. The corresponding Johansson diagrams
are those of Figures~\ref{fig:shima_00-Johansson},~\ref{fig:shima_01}
and~\ref{fig:shima_02}. 

The Johansson diagrams of Figures~\ref{fig:audi_01} 
and~\ref{fig:star-L31} appeared in~\cite{Rthesis}. They are, respectively, the 
Johansson diagram of a filling Dehn sphere of $S^2\times S^1$, and of $L(3,1)$.

It is well known that two closed curves in $S^2$ having nonempty transverse
intersection 
must have an even number of intersection points. Along the text we will refer to
this property as the \emph{even intersection property}.

\begin{figure}
\centering
\subfigure[]{\includegraphics[width=0.3\textwidth]{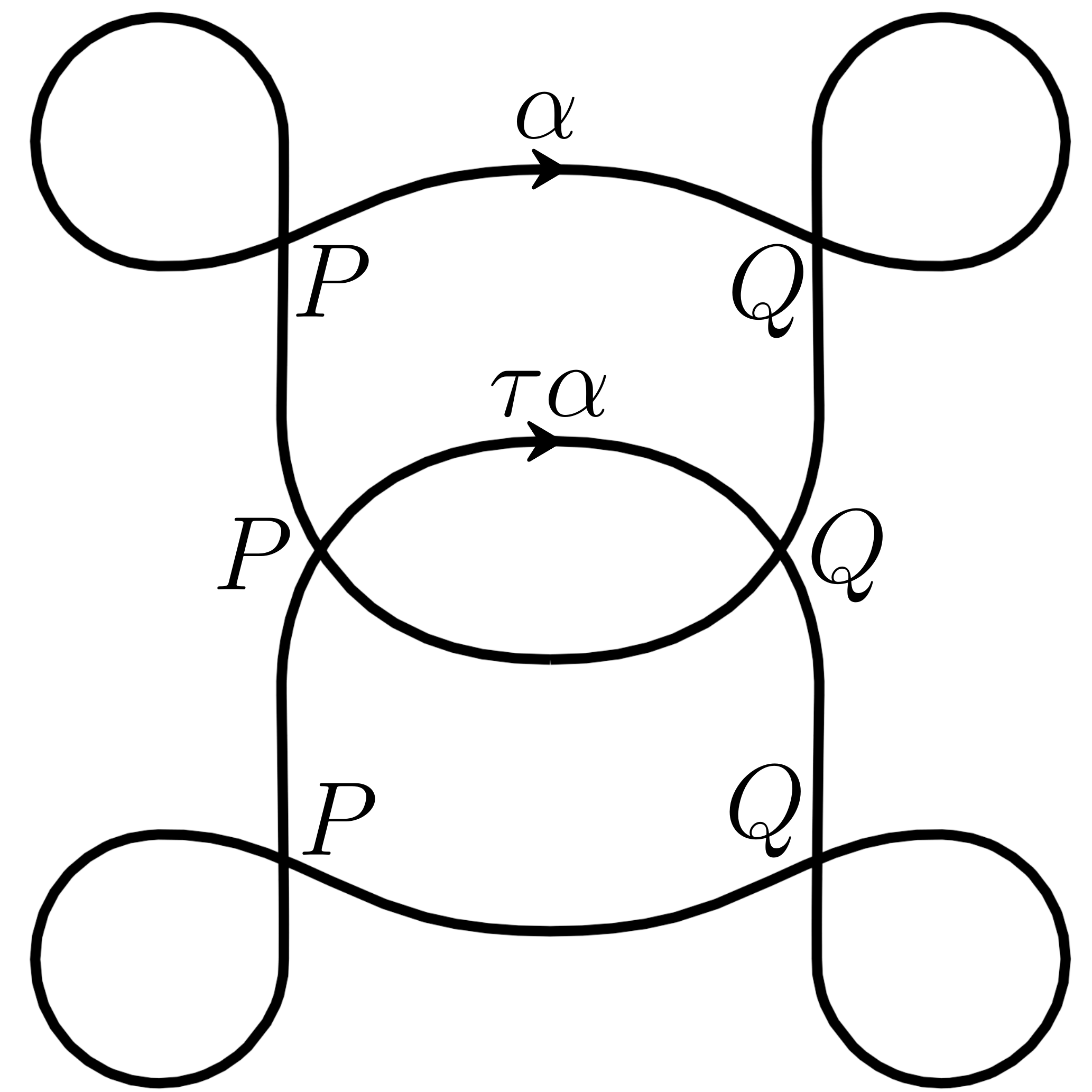}
\label{fig:shima_00-Johansson} }
\hfill
\subfigure[]{\includegraphics[width=0.3\textwidth]{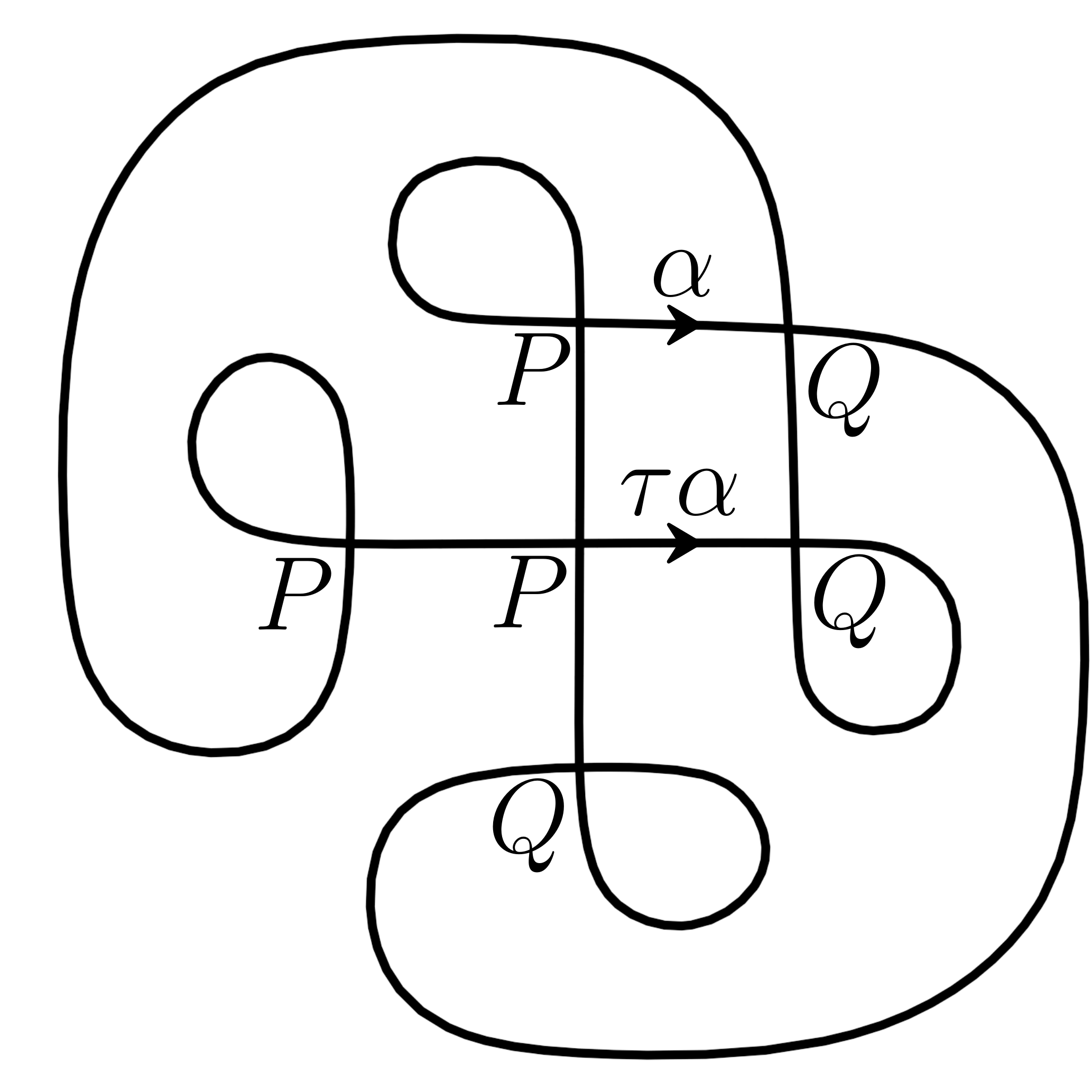}
\label{fig:shima_01} }
\hfill
\subfigure[]{\includegraphics[width=0.3\textwidth]{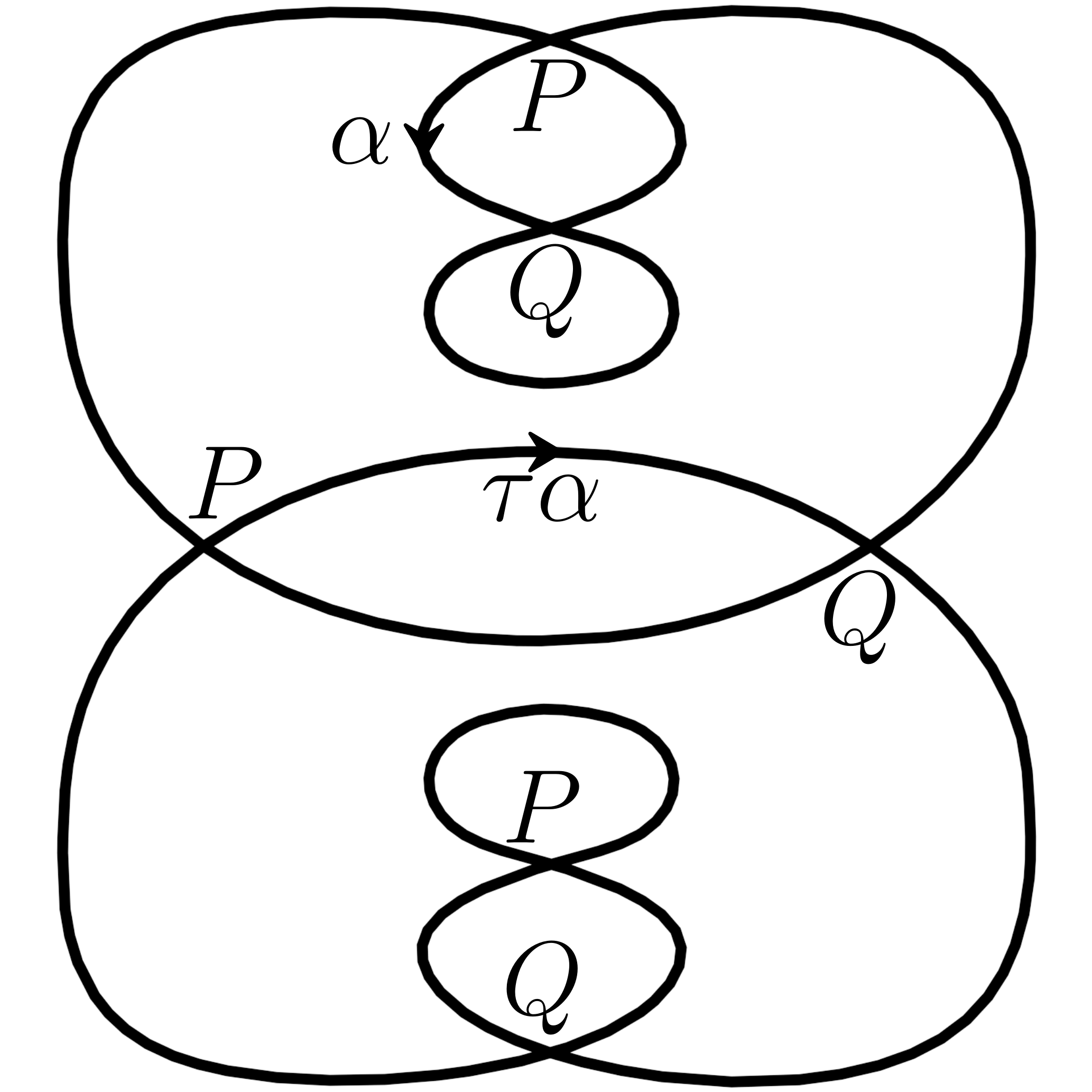}
\label{fig:shima_02} }\\
\hfill\subfigure[]{\includegraphics[width=0.3\textwidth]{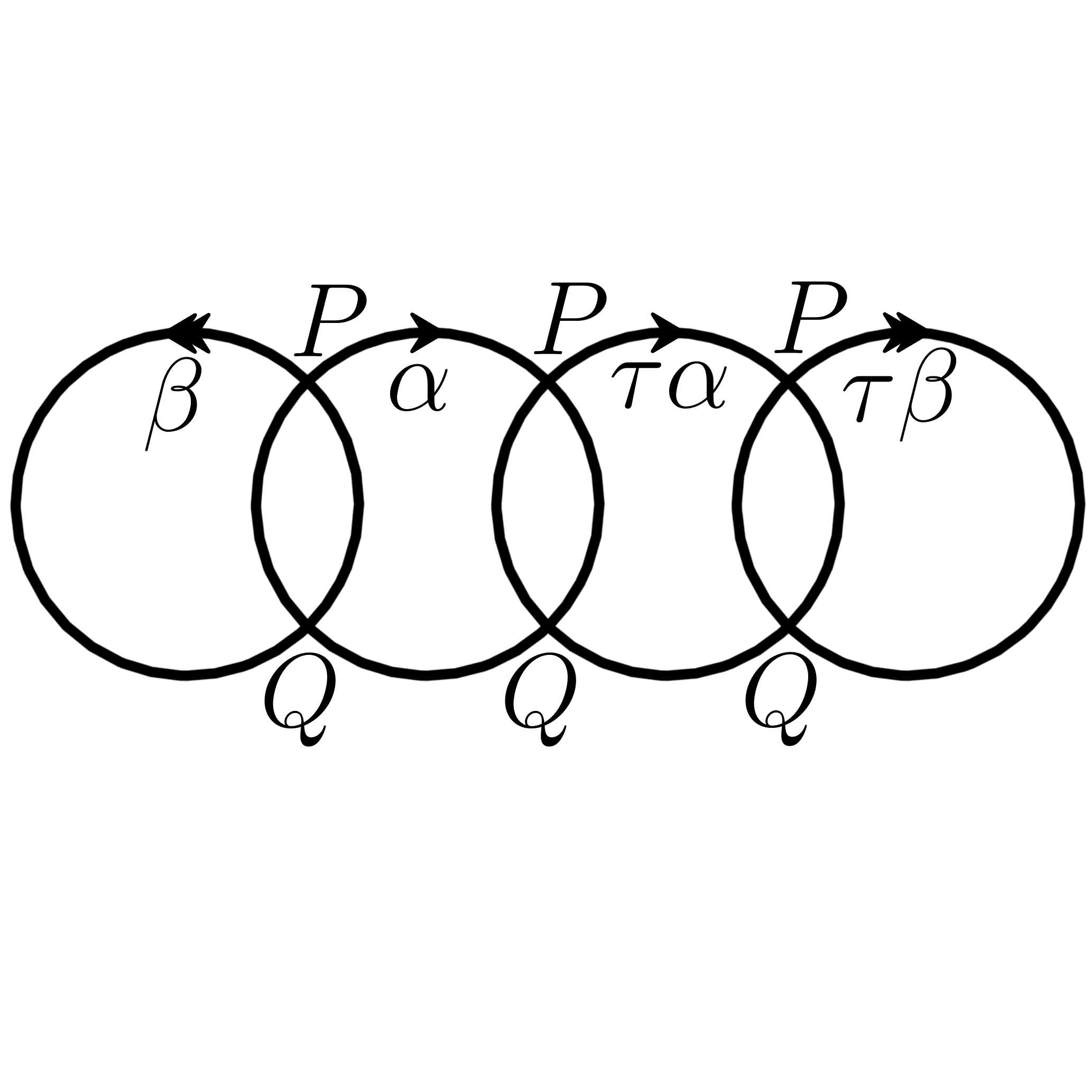}
\label{fig:audi_01} }
\hfill
\subfigure[]{\includegraphics[width=0.3\textwidth]{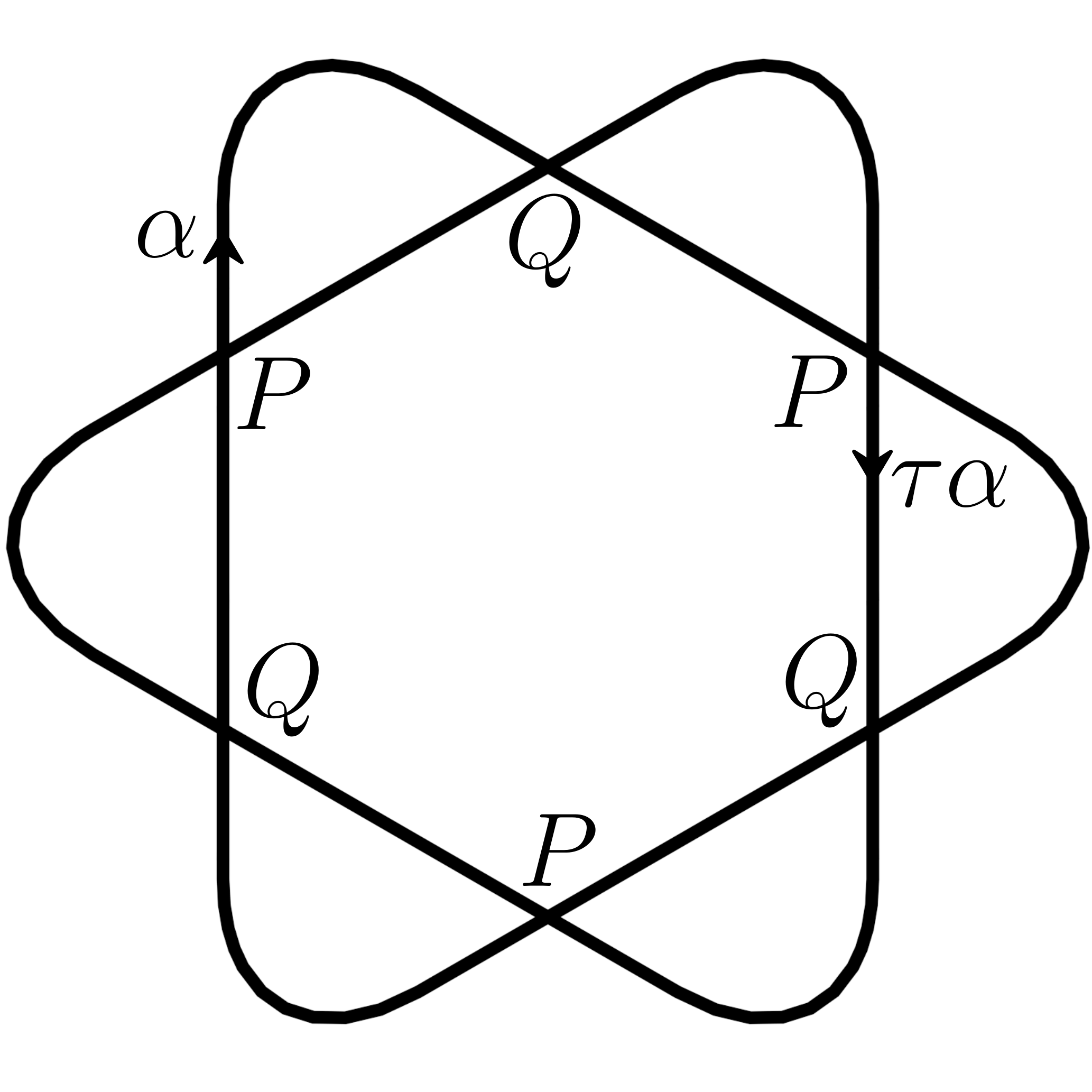}\label{fig:star-L31}
}\hfill\hfill

\caption{Filling Johansson diagrams with two triple
points}\label{fig:diagrams-2-triple-points}
\end{figure}

\begin{lemma}
If the Dehn sphere $\S$ has $p$ triple points and its Johannson diagram $\D$ is
connected, it can have at most $(2+3p)/4$ double curves.
\end{lemma}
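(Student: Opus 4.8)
The plan is to reduce everything to a vertex count together with a connectivity argument in which the \emph{even intersection property} supplies a crucial factor of two. First I would count the double points of $\D$. Every triple point of $\S$ contributes exactly the three double points of its triplet, and by construction these are \emph{all} the double points of $\D$; hence $\D$ has exactly $3p$ double points.

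Write $c$ for the number of curves of $\D$. Since $\tau$ is a fixed-point-free involution pairing each curve with its (distinct) sister curve, $c$ is even and the number of double curves of $\S$ equals $c/2$. Thus the assertion $c/2 \le (2+3p)/4$ is equivalent to
\[
  2(c-1) \le 3p,
\]
and it is this inequality that I would establish.

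To do so I would introduce the adjacency graph $H$ whose vertices are the $c$ curves of $\D$, with two distinct curves joined by an edge precisely when they meet (self-intersections being ignored). Because two curves lying in different components of $H$ cannot be connected by any chain of crossings, connectedness of $\D$ as a subset of $S^2$ is equivalent to connectedness of $H$; in particular $H$ admits a spanning tree with $c-1$ edges. Now the even intersection property enters: any two distinct curves that meet transversely do so in an even, hence at least two, number of double points. The $c-1$ tree edges correspond to $c-1$ distinct unordered pairs of curves, so together they account for at least $2(c-1)$ pairwise distinct double points of $\D$. As $\D$ has only $3p$ double points in all, we obtain $2(c-1)\le 3p$, which is exactly the bound sought.

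The step requiring the most care is this final counting: one must discard the self-intersection double points, since they are irrelevant to connectivity; track that distinct tree edges involve disjoint sets of double points; and observe that the even intersection property is precisely what upgrades the naive spanning-tree estimate $c-1\le 3p$ to the twice-as-strong $2(c-1)\le 3p$. Without it one would only get $c/2 \le (3p+1)/2$, far weaker than the claimed $(2+3p)/4$.
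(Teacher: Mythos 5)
Your argument is correct and is essentially the paper's own proof: the paper likewise counts that the $3p$ double points yield at most $3p/2$ distinct \emph{intersecting pairs} of curves (by the even intersection property), and then uses connectedness of $\D$ to bound the number of curves by $1+3p/2$, giving $(2+3p)/4$ double curves. Your spanning-tree formulation just makes explicit the connectivity step that the paper leaves implicit.
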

\begin{proof}
We define an \emph{intersecting pair} of $\D$ as a pair of different curves of
$\D$ having nonempty intersection. Although $\D$ has $3p$ double points, by the
even intersection property it can have at most $3p/2$ distinct intersecting
pairs. As $\D$ is connected, it can have at most $1+3p/2$ different curves, and
so $\S$ can have at most $(1+3p/2)/2=(2+3p)/4$ double curves.
\end{proof}

In particular, a filling Dehn sphere with $2$ triple points can have at most 2
double curves, and a filling Dehn sphere with 4 triple points can have at most 3
double curves.

\section{Surgery on minimal Dehn spheres. Proof of Theorem
\ref{thm:subadditivity-2}}\label{sec:surgery}

Let $M_1$ and $M_2$ be two 3-manifolds, and let $\S_1$ and $\S_2$ be a filling
Dehn sphere
of $M_1$ and a filling Dehn sphere of $M_2$, respectively. Assume that $\S_1$
and $\S_2$ are minimal in $M_1$ and $M_2$, respectively. The
connected sum $M_1\#M_2$ is performed by removing the interior of two closed
3-balls $B_1$ and $B_2$ lying in $M_1$ and $M_2$ respectively. After that, in
the disjoint union of $M_1\setminus int(B_1)$ and $M_2\setminus int(B_2)$ 
the boundaries of $B_1$ and $B_2$ become identified by an homeomorphism.

\begin{figure}
\centering
\begin{tabular}{ll}
\subfigure[]{\includegraphics[width=0.3\textwidth]{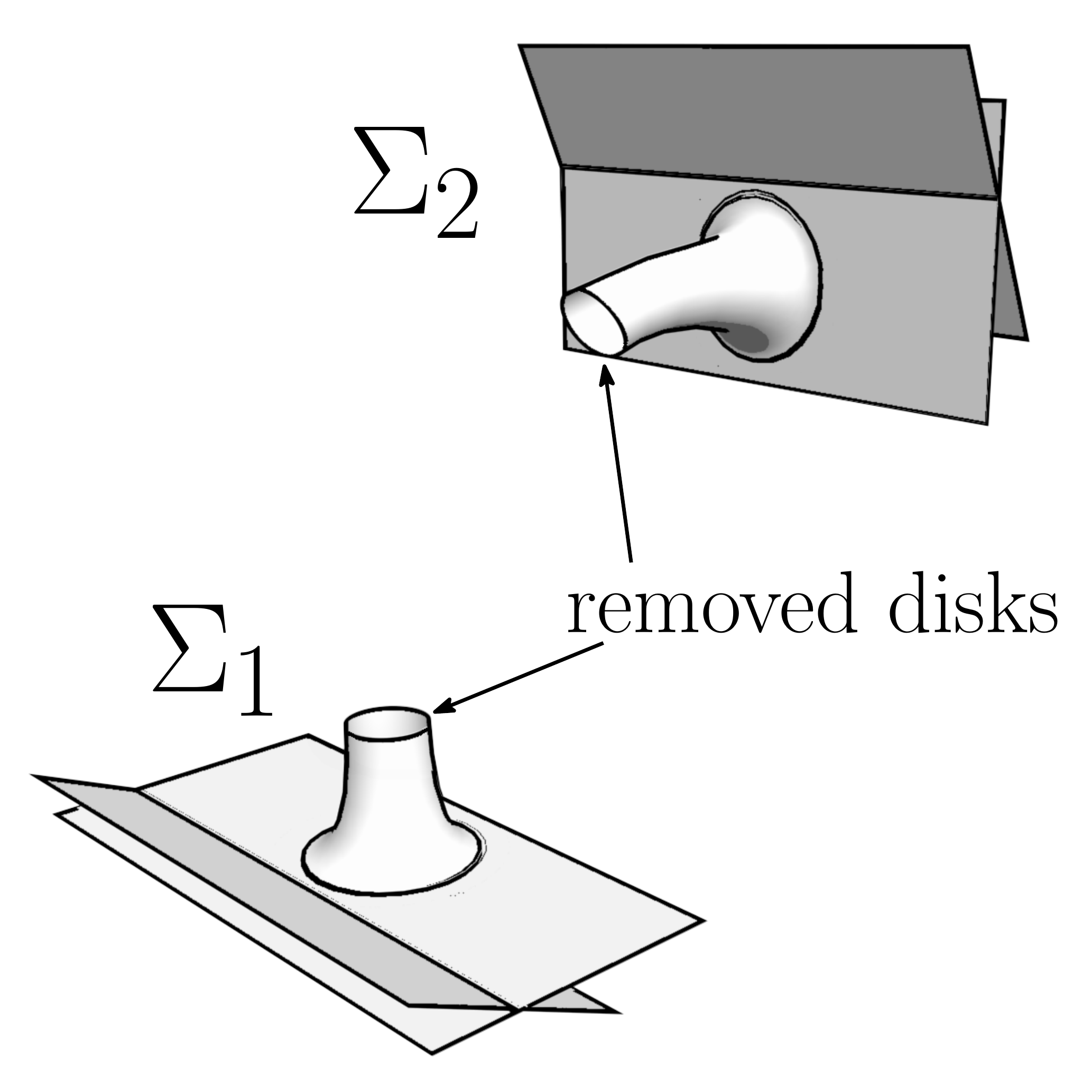}
\label{fig:surgery_00} }
&
\subfigure[]{\includegraphics[width=0.3\textwidth]{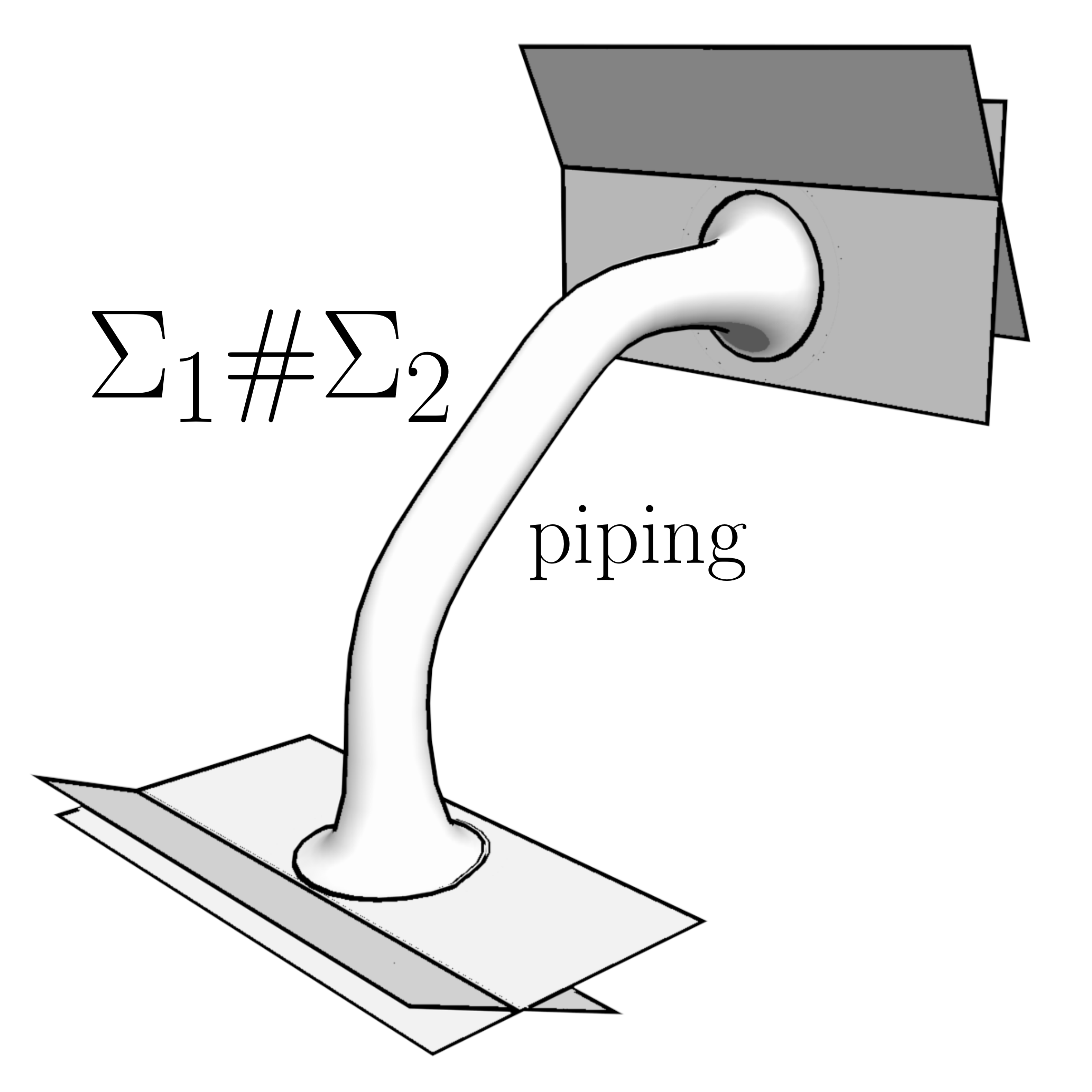}
\label{fig:surgery_01} }\\ 
\subfigure[]{\includegraphics[width=0.3\textwidth]{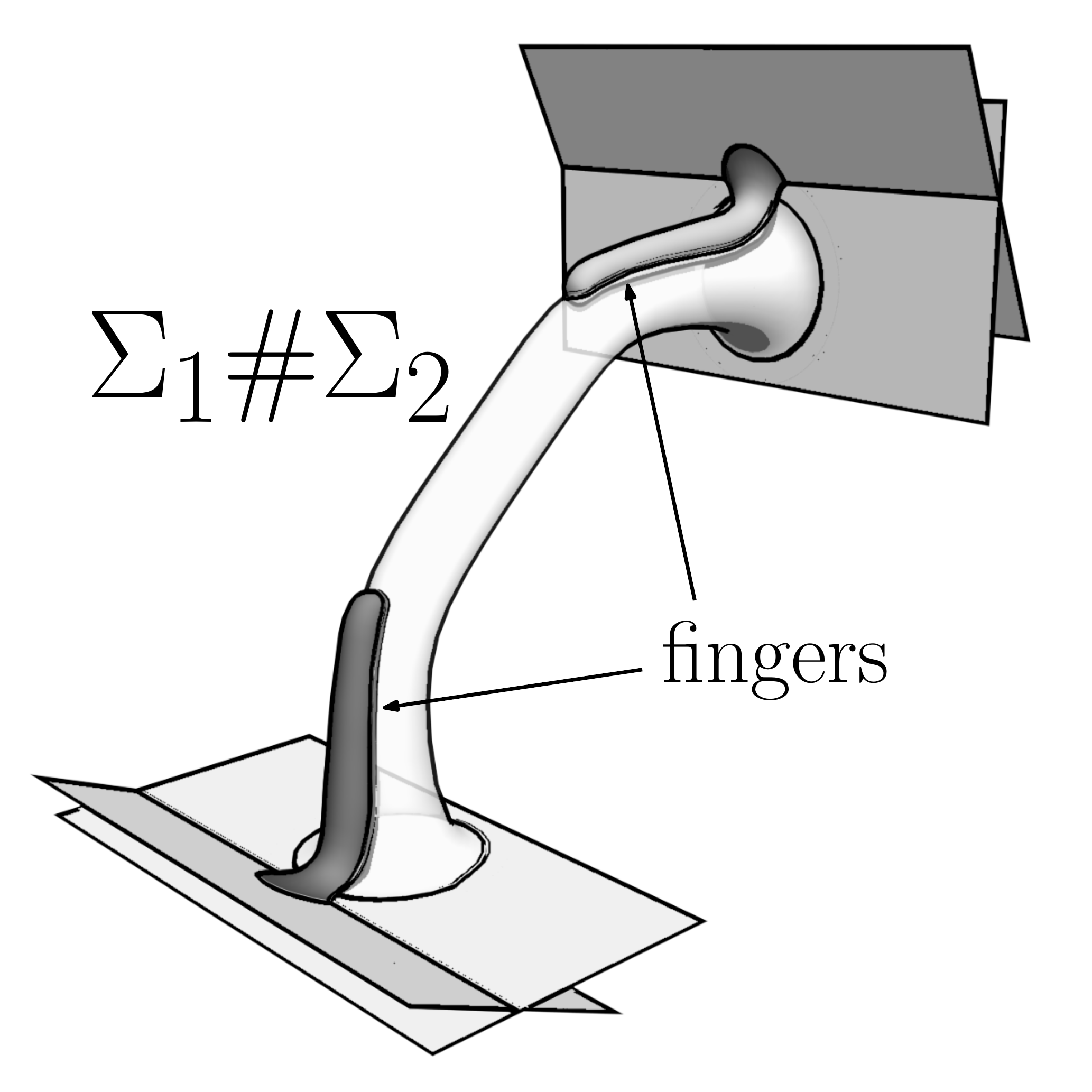}
\label{fig:surgery_02} }
&
\subfigure[]{\includegraphics[width=0.4\textwidth]{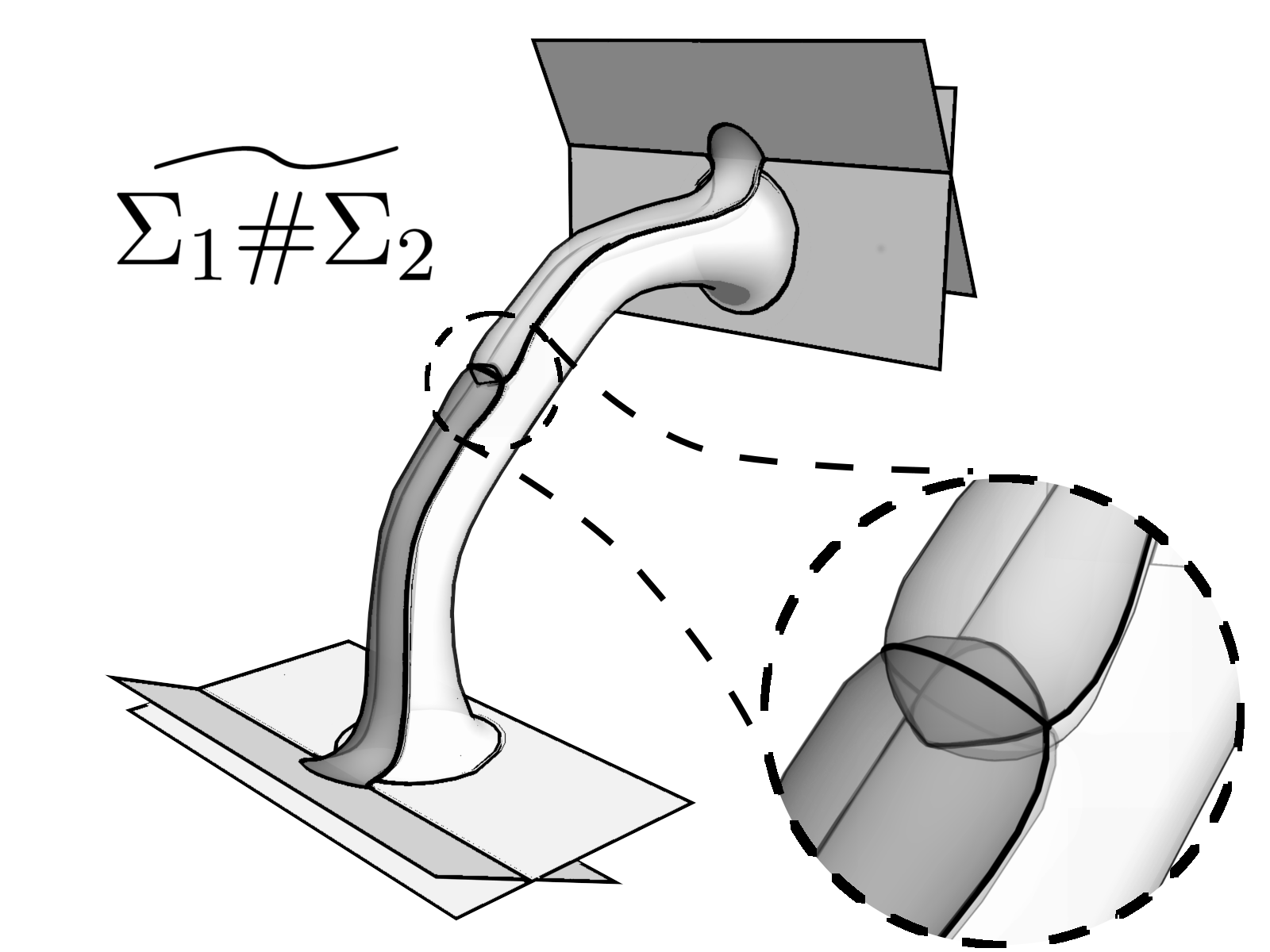}
\label{fig:surgery_03}}
\end{tabular}
\caption{Surgery between disjoint Dehn surfaces}\label{fig:surgery}
\end{figure}

If we choose the 3-balls $B_1$ and $B_2$ not intersecting $\S_1$ and $\S_2$,
respectively, the Dehn spheres $\S_1$ and $\S_2$ are transformed after the
connected sum into a pair of disjoint Dehn spheres of $\MM$, and the
connected component of $\MM\setminus(\S_1\cup\S_2)$ lying between
them is homeomorphic to $S^2\times I$, where $I$ is any open interval. We can
remove a small disk from $\S_1$ and from $\S_2$ (Figure \ref{fig:surgery_00}) in
order to connect them along a
\emph{piping} as in Figure \ref{fig:surgery_01}. After that, we obtain a Dehn
sphere
$\S_1\#\S_2$ which is not filling, but it is quasi-filling: the complementary
set of $\S_1\#\S_2$ in $\MM$ is a disjoint union of open 3-balls. The Dehn
sphere $\S_1\#\S_2$ is not filling because after the piping we have created a
connected component of $\S_1\#\S_2\setminus S(\S_1\#\S_2)$ which is
topologically an open annulus. This obstruction can be removed by throwing two
\emph{fingers}  (Figure \ref{fig:surgery_02}) along the piping between $\S_1$
and $\S_2$ until they intersect
as in Figure \ref{fig:surgery_03}, creating two new triple points. The resulting
Dehn sphere
$\widetilde{\S_1\#\S_2}$ is now a filling one, and 
it has $p_1+p_2+2$ triple points, where $p_1$ and $p_2$ are the number of triple
points of $\S_1$ and $\S_2$  respectively.

As $\S_1$ and $\S_2$ are minimal we have that $p_1=mc(M_1)$ and $p_2=mc(M_2)$.
This proves Theorem~\ref{thm:subadditivity-2}.

In Figure \ref{fig:surgery-diagram} we have illustrate the modifications to be
made on the Johansson diagrams of $\S_1$ and $\S_2$ in order to obtain the
Johansson
diagram of $\widetilde{\S_1\#\S_2}$ when $M_1$ and $M_2$ are two copies of
$L(3,1)$ an  $\S_1$ and $\S_2$ are two identical copies of the filling Dehn
sphere of $L(3,1)$ whose Johansson diagram is that of Figure \ref{fig:star-L31}.
The Johansson diagrams of $\S_1$ and $\S_2$ are two copies of the diagram of
Figure \ref{fig:star-L31} depicted in two different 2-spheres $S_1$ and $S_2$.
We have assumed that $M_1$
and $M_2$, $\S_1$ and $\S_2$, and $B_1$ and $B_2$ respectively are exact copies
of each other and that the homeomorphism that identifies $\partial B_1$ with
$\partial B_2$ is the identity map. With this assumptions, $\S_1$ and $\S_2$
become two specular copies of each other in $\MM$. If the disks removed from
$\S_1$ and $\S_2$ during the piping were also identical, we must paste two
specular copies of the same Johansson diagram as in Figure
\ref{fig:surgery-diagram} in order to obtain the Johansson diagram of
$\widetilde{\S_1\#\S_2}$ (Figure \ref{fig:surgery-diagram_03}).

\begin{figure}
\centering
\subfigure[]{\includegraphics[width=0.48\textwidth]{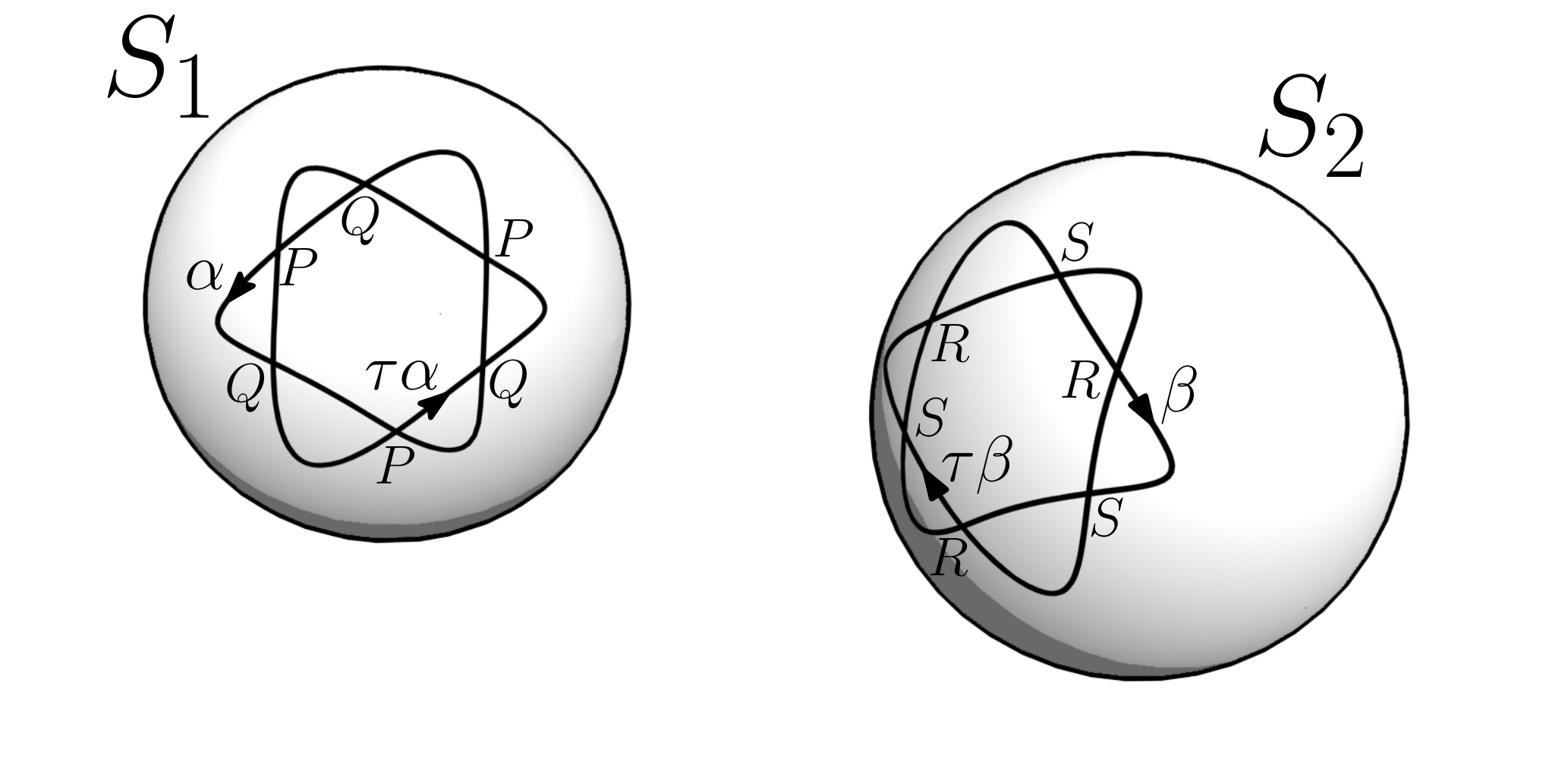}\label
{fig:surgery-diagram_00} }
\subfigure[]{\includegraphics[width=0.48\textwidth]{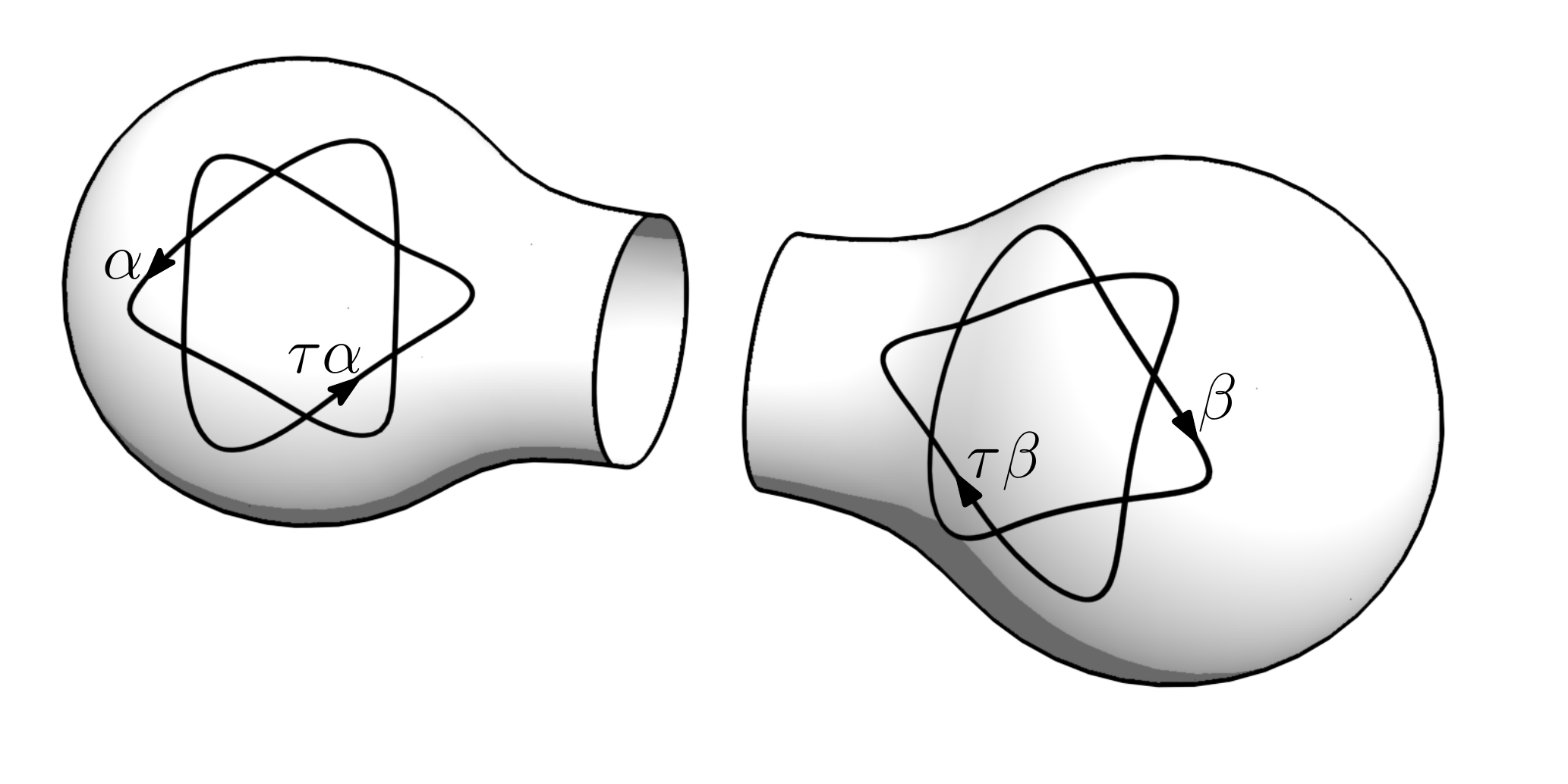}\label
{fig:surgery-diagram_01} }\\ 
\subfigure[]{\includegraphics[width=0.48\textwidth]{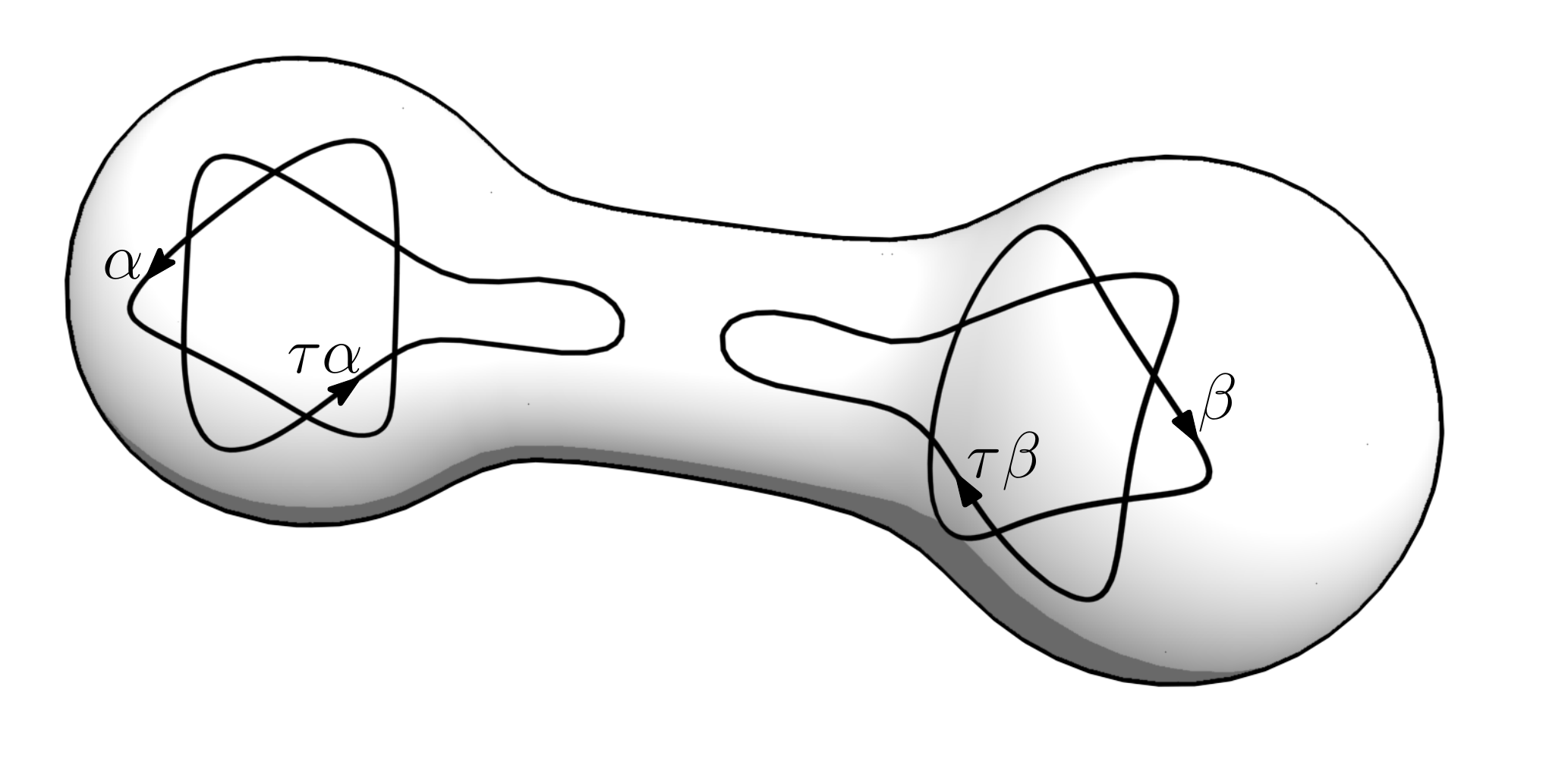}\label
{fig:surgery-diagram_02} }
\subfigure[]{\includegraphics[width=0.48\textwidth]{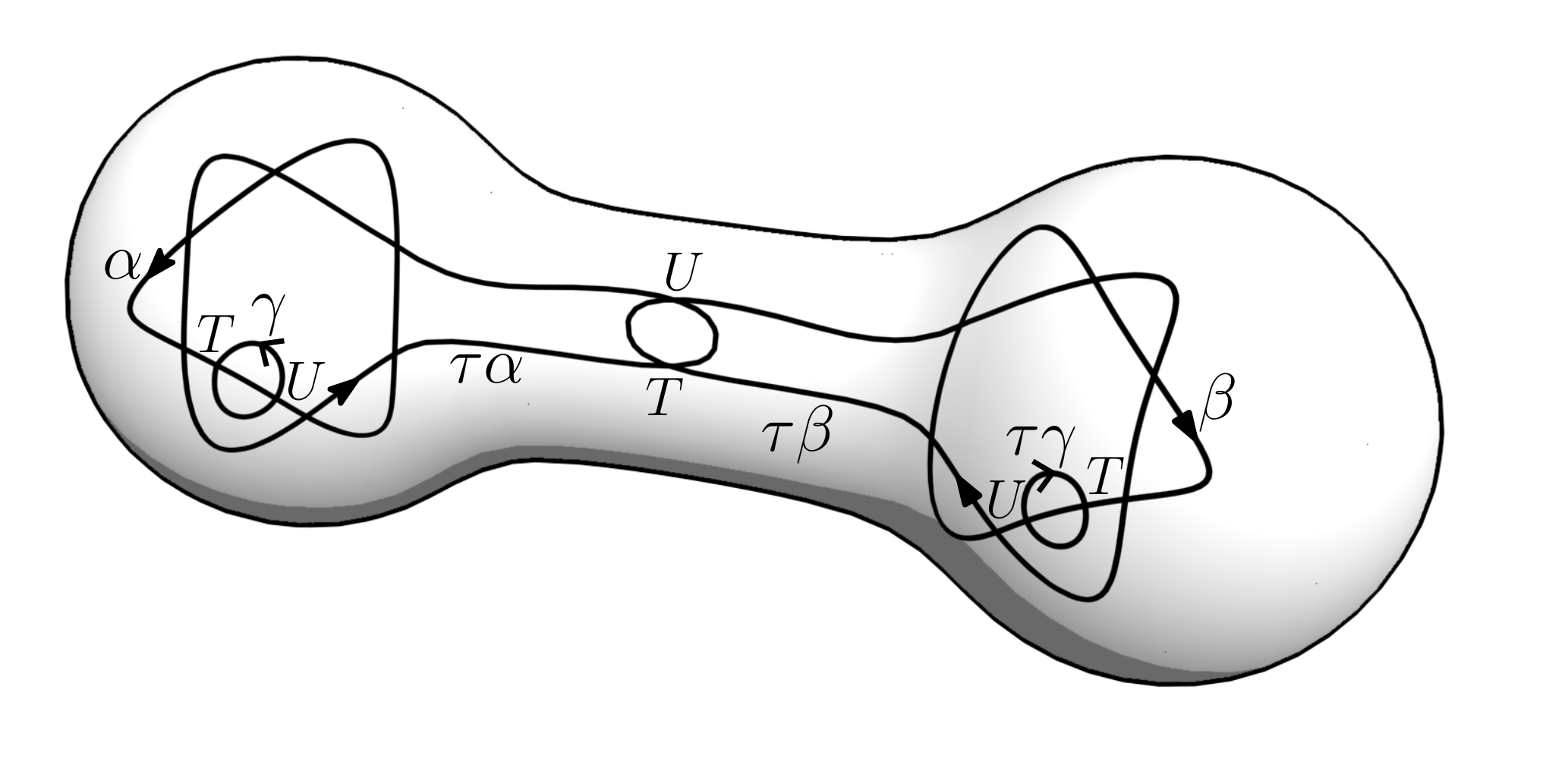}\label
{fig:surgery-diagram_03}}
\caption{Surgery between Johansson diagrams}\label{fig:surgery-diagram}
\end{figure}

\section{The diagram group}
\label{sec:diagram-group}

Let $\S$ be a filling Dehn sphere on $M$, let $f$ be a
parametrization of $\S$, and let $\D$ be the Johansson diagram of $\S$.

The Johansson diagram of $\S$ provides a presentation of the fundamental group 
of $M$ (see \cite{RHomotopies,Rthesis}). Let $\tau$ be the involution on the set
of curves of $\D$ that relates each curve with its sister curve. If
$\alpha_1,\alpha_2,\ldots,\alpha_{k}$ are the curves of $\D$, then $\pi_1(M)$ is
isomorphic to the \emph{diagram group}:
\begin{equation}
\pi(\D)=\langle\alpha_1,\ldots,\alpha_{k}|
\alpha_1\cdot\tau\alpha_1=\cdots=\alpha_{k}\cdot\tau\alpha_{k}=r_1=\ldots=r_p=1
\rangle\,,\label{eq:diagram-group}
\end{equation}
where the relators $r_i,i=1,\ldots,p$ are the \emph{triple point relators} of
$\D$
because they are given by the triple points of $\S$: if $P$ is a triple point of
$\S$ and we label its three  preimages in $S^2$ and the curves of $\D$
intersecting at them as in Figure~\ref{fig:triple-point-00}, the corresponding
relation is $r=\alpha\beta\gamma=1$. This presentation of $\pi_1(M)$ is due to
W. Haken (see Problem 3.98 of \cite{Kirby}). As $\aa_i$
and $\ta_i$, with $i=1,\ldots,k$, are inverse to each other in $\pD$, we will
use the notation $\aa_i^{-1}$ instead
of $\ta_i$ when we were
talking about elements of $\pD$.

At the triple point $P$ of $\S$ one, two or three different double curves of
$\S$ could intersect, and in each case we say that $P$ is a triple point of
\emph{type I},
\emph{type II} or \emph{type III}, respectively. We will
analyze these cases with more detail. Let $P_1,P_2,P_3$ be the triplet of $P$.

\paragraph{Type I triple points}
 
If the three arcs of double curve that intersect at $P$ belong to the same
double 
curve of $\S$, two things could happen:

\begin{itemize}
 \item \textbf{Type I.1}: one of the double points $P_1,P_2,P_3$ is a
self-intersection 
point of a curve of $\D$. If $P_1$, for example, is a self-intersecting point of
a curve $\aa$ of $\D$, then we are necessarily in the situation of
Figure~\ref{fig:triple-point-I1}: the other two double points of the triplet
must be an intersection point of $\aa$ with $\ta$ and a self-intersection point
of $\ta$. In this case the corresponding relation is $\aa\aa\aa^{-1}=1$, which
implies that $\aa=1$. We say also that the relation of $\pD$ obtained from $P$
is of \emph{type I.1}.
 \item \textbf{Type I.2}: none of the double points $P_1,P_2,P_3$ is a 
self-intersection point of a curve of $\D$. In this case, the three double
points $P_1,P_2,P_3$ are intersection points of a curve $\aa$ of $\D$ with its
sister curve $\ta$ (Figure~\ref{fig:triple-point-I2}). The corresponding
\emph{type I.2 relation} is $\aa^3=1$.
\end{itemize}

\paragraph{Type II triple points}

If two, but not three, of the three arcs of double curve that intersect at $P$
belong to 
the same double curve $\bar{\aa}$ of $\S$, and $\aa,\ta$ are the curves of $\D$
that project onto $\bar{\aa}$ under $f$, two possibilities arise:
\begin{itemize}
 \item \textbf{Type II.1\footnote{In this situation, in the notation of
\cite{Haken1}
it is said that the double
curve 
$f(\bb)$ of $\S$ is \emph{compensated}. A Dehn sphere such that each double
curve is compensated is simply
connected.}}: one of the three double points $P_1,P_2,P_3$ is a
self-intersec\-tion point of $\aa$ or $\ta$. If, for example, $P_1$ is a
self-intersection point $\aa$, the other two points $P_2,P_3$ must be
intersection points of $\ta$ with $\bb$ and $\tb$, where $\bb,\tb$ are curves of
$\D$ different from $\aa$ and $\ta$. We get a situation similar to that of
Figure~\ref{fig:triple-point-II1}, where the corresponding \emph{type II.1
relation} is $\aa\bb\aa^{-1}=1$, which is equivalent to
$\bb=1$.
 \item \textbf{Type II.2}: if none of the three double points $P_1,P_2,P_3$ is a
self-intersection point of $\aa$ or $\ta$, one of them, say $P_1$ must be a
intersection point of $\aa$ with $\ta$. Then, we have a configuration similar to
that of Figure~\ref{fig:triple-point-II2}, whose corresponding \emph{type II.2
relation} is $\aa\bb\aa=1$, which is equivalent to $\bb=\aa^{-2}$.
\end{itemize}

\paragraph{Type III triple points}

If the three arcs of double curve that intersect at $P$ belong to different
double 
curves, then we can label the curves of $\D$ to obtain a configuration as that
of Figures~\ref{fig:triple-point-00} and~\ref{fig:triple-point-III}, whose
\emph{type III relation} is $\aa\bb\cc=1$.

$\D$) with a vertex $[\aa]$ represented by each curve of $\aa\in\D$ and the
edges given by the double points of $\D$. If $\aa$ and $\bb$ intersect each
other at $m$ different double points of $\D$, the graph $G_\D$ will have exactly
$m$ edges joining $[\aa]$ and $[\bb]$, and if the curve $\aa$ has $n$
self-intersection points, the graph $G_\D$ has $n$ edges joining $[\aa]$ with
itself. With these assumptions, because the number of intersection points
between two different curves is even, each vertex of $G_\D$ has even degree.
Though the number of edges of $G_\D$ is $3p$, two different closed curves in
$S^2$ with transverse intersection intersect each other at an even number of
double points, and so the number of pairs of different vertices of $G_\D$ which
are adjacent is at most $3p/2$.
%
so $G_\D$ can have at most $1+3p/2$ vertices. This gives an upper bound to the
number of double curves of $\S$ in terms of the number of triple points of $\S$
for a Dehn sphere $\S$. Note that the unique property of $\S$ related to
fillingness that we have used is the connectedness of the Johansson diagram of
$\S$. Thus, we have:
whose Johannson diagram is connected can have at most $(2+3p)/4$ double curves.	
double curves, and a filling Dehn sphere with 4 triple points can have at most 3
double curves.
We will denote by $1$ the trivial group and by $\Z_q$, with $q=2,3,\ldots$, the
cyclic group with $q$ elements. For any two groups $H,G$, we write
$H\cong G$ when both groups are isomorphic, and $H\lesssim G$ when $H$ is
isomorphic to a subgroup of $G$.

\begin{theorem}\label{thm:at-most-two-curves}
If $\S$ is a filling Dehn sphere of $M$ with at most two double curves, then
if $\pi_1(M)$ is not trivial it is isomorphic to
$\Z,\Z_2,\Z_3,\Z_4,\Z_5$ or $\Z_6$.
\end{theorem}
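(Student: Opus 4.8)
The plan is to use the presentation of $\pi_1(M)$ given by the diagram group $\pD$ in equation~\eqref{eq:diagram-group}, together with the analysis of triple point types, and then perform a careful case-by-case study according to how many double curves (i.e.\ how many sister-pairs) $\D$ has and how many triple points fall into each type. Since $\S$ has at most two double curves, the curves of $\D$ come in at most two sister pairs, so the generating set of $\pD$ has at most two generators $\aa$ and $\bb$ (with $\ta=\aa^{-1}$ and $\tb=\bb^{-1}$). The key structural input is that the relators $r_1,\ldots,r_p$ are all of the restricted forms catalogued in Section~\ref{sec:diagram-group}: type I relations force a generator to be trivial or to satisfy $\aa^3=1$; type II relations either kill a generator or impose $\bb=\aa^{-2}$; and type III relations read $\aa\bb\cc=1$, but with only two sister pairs available a type III triple point cannot occur. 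So every triple point relation is among $\aa=1$, $\bb=1$, $\aa^3=1$, $\bb^3=1$, $\aa\bb\aa^{-1}=1$, or $\aa\bb\aa=1$.

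First I would dispose of the one-double-curve case: then $\pD$ has a single generator $\aa$, so $\pi_1(M)$ is cyclic, and the only relations available are $\aa=1$ or $\aa^3=1$, forcing $\pi_1(M)\cong 1$ or a quotient of $\Z_3$; in any case the group is among the allowed list (or trivial). The substance is the two-double-curve case, where $\pD=\langle\aa,\bb\mid R\rangle$ with $R$ a nonempty set of relations drawn from the short list above. Here I would argue that connectedness of $\D$ forces $\aa$ and $\bb$ to genuinely interact, so at least one relation must involve both generators — i.e.\ be a type~II relation $\aa\bb\aa^{-1}=1$ or $\aa\bb\aa=1$ (equivalently $\bb=1$ or $\bb=\aa^{-2}$). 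In the first subcase $\bb$ is killed and $\pi_1(M)$ becomes cyclic in $\aa$, handled as before. In the decisive subcase $\bb=\aa^{-2}$, substituting into any remaining relation yields a single-generator presentation $\langle\aa\mid \aa^{n}=1\rangle$ for some exponent $n$ determined by combining $\bb=\aa^{-2}$ with whatever other relation is present ($\aa=1$, $\aa^3=1$, a second type~II relation, etc.). The point is that each admissible combination produces $n\in\{1,2,3,4,5,6\}$: for instance pairing $\bb=\aa^{-2}$ with a type~I.2 relation $\aa^3=1$ gives $\Z_3$; pairing two type~II.2 relations can give $\Z_5$; and the "finger'' construction underlying filling diagrams limits how large the resulting exponent can be, with $6$ as the maximum.

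The main obstacle will be the bookkeeping in the subcase $\bb=\aa^{-2}$: I need to enumerate exactly which pairs (or triples) of admissible relations can coexist in a genuine filling Johansson diagram, and verify that every resulting abelianization-compatible exponent lies in $\{1,2,3,4,5,6\}$ and that $\Z$ itself arises precisely when the only surviving relation is a single $\aa\bb\aa^{-1}=1$ (so no torsion is imposed). I would organize this by the number of triple points and by which relation types are realized, using the even intersection property and the constraint from the preceding Lemma that with two double curves the triple count is bounded, to keep the enumeration finite. The delicate step is ruling out exponents $n\ge 7$: I expect this to follow from the observation that, modulo the relation $\bb=\aa^{-2}$, any further type~II.2 relation $\aa\bb\aa=1$ reduces to $\aa^{-1}=1$ or a low-order constraint, so no relation can produce an exponent outside the stated range — but making this airtight, rather than merely plausible, is where the real care is needed.
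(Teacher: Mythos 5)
Your proposal follows essentially the same route as the paper's proof: handle the one-double-curve case directly, use connectedness of the diagram to force a type II relation in the two-curve case, split into the subcases $\bb=1$ and $\bb=\aa^{-2}$ (each of which makes $\pD$ cyclic generated by $\aa$), and then enumerate the finitely many remaining length-three relators to check that every resulting exponent lies in $\{1,\ldots,6\}$. The enumeration you defer is exactly the case list the paper writes out, and the bound $n\leq 6$ comes simply from substituting $\bb=\aa^{\mp 2}$ into the worst remaining relator $\bb^{3}=1$ (all relators being words of length three in the two generators), not from any property of the finger construction.
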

\begin{proof}
If $\S$ has only one double curve, there is one generator of the $\pD$, and all
the triple points are of type I. It is $\pD=1$ or $\pD\cong \Z_3$.

Assume now that $\S$ has two double curves and let $\aa,\ta,\bb,\tb$ be the
curves of the Johansson diagram $\D$ of $\S$. Because $\S$ is filling, the
singularity set of $\S$ and the diagram $\D$ are connected, and so there must be
at least one type II triple point $P$ in $\S$.

If there is one triple point of type II.1, we can assume that
of Figure~\ref{fig:triple-point-II1}, and so 
the relation $\bb=1$ holds for the diagram group $\pD$. Consequently, $\pD$ is
the cyclic group generated by $\aa$. If there is a type II relation not
equivalent to $\bb=1$:
\begin{itemize}
 \item the relators $\aa\bb\aa$ or $\aa\bb^{-1}\aa$ would imply $\aa^2=1$, and
so it is $\pD\lesssim\Z_2$;
 \item $\bb\aa\bb$, $\bb\aa^{-1}\bb$, $\bb\aa\bb^{-1}$ or $\bb\aa^{-1}\bb^{-1}$
lead to $\aa=1$, so $\pD$ is trivial.
\end{itemize}
Assume that all the type II relations in $\pD$ are equivalent to $\bb=1$. If
there's 
no type I relation involving the generators $\aa$ or $\ta=\aa^{-1}$, the
generator $\aa$ 
would be free and so it is $\pD\cong\Z$. If there are type I relators
involving 
$\aa$ or $\aa^{-1}$, we would have $\pD\cong 1$ or $\pD\cong \Z_3$. 

If there is no triple point of type II.1 we can assume, by renaming the curves
of $\D$ if 
necessary, that the relation $\aa\bb\aa=1\Leftrightarrow
\bb=\aa^{-2}$ holds in $\pD$. Again, $\pD$ is the cyclic group generated by
$\aa$.
either $\Z$ or 
If there is another relation in $\pD$ not equivalent to 
$\bb=\aa^{-2}$:
\begin{itemize}
 \item $\aa\bb^{-1}\aa=1$ leads to $\aa^4=1$, and so $\pD\lesssim\Z_4$;
\item $\bb\aa\bb=1$ would imply that $\aa^3=1$ and so $\pD\lesssim\Z_3$;
\item $\bb\aa^{-1}\bb=1$ gives $\aa^5=1$ and so $\pD\lesssim\Z_5$;
\item $\aa=1$ makes $\pD$ trivial;
\item $\aa^3=1$ implies that $\pD\lesssim\Z_3$;
\item $\bb=1$ makes $\pD\lesssim\Z_2$;
\item $\bb^3=1$ leads to $\aa^6=1$, and so $\pD\lesssim\Z_6$.
\end{itemize}
\end{proof}

\begin{figure}
\centering
\subfigure[$\aa\aa\aa^{-1}=1\iff
\aa=1$]{\includegraphics[width=0.8\textwidth]{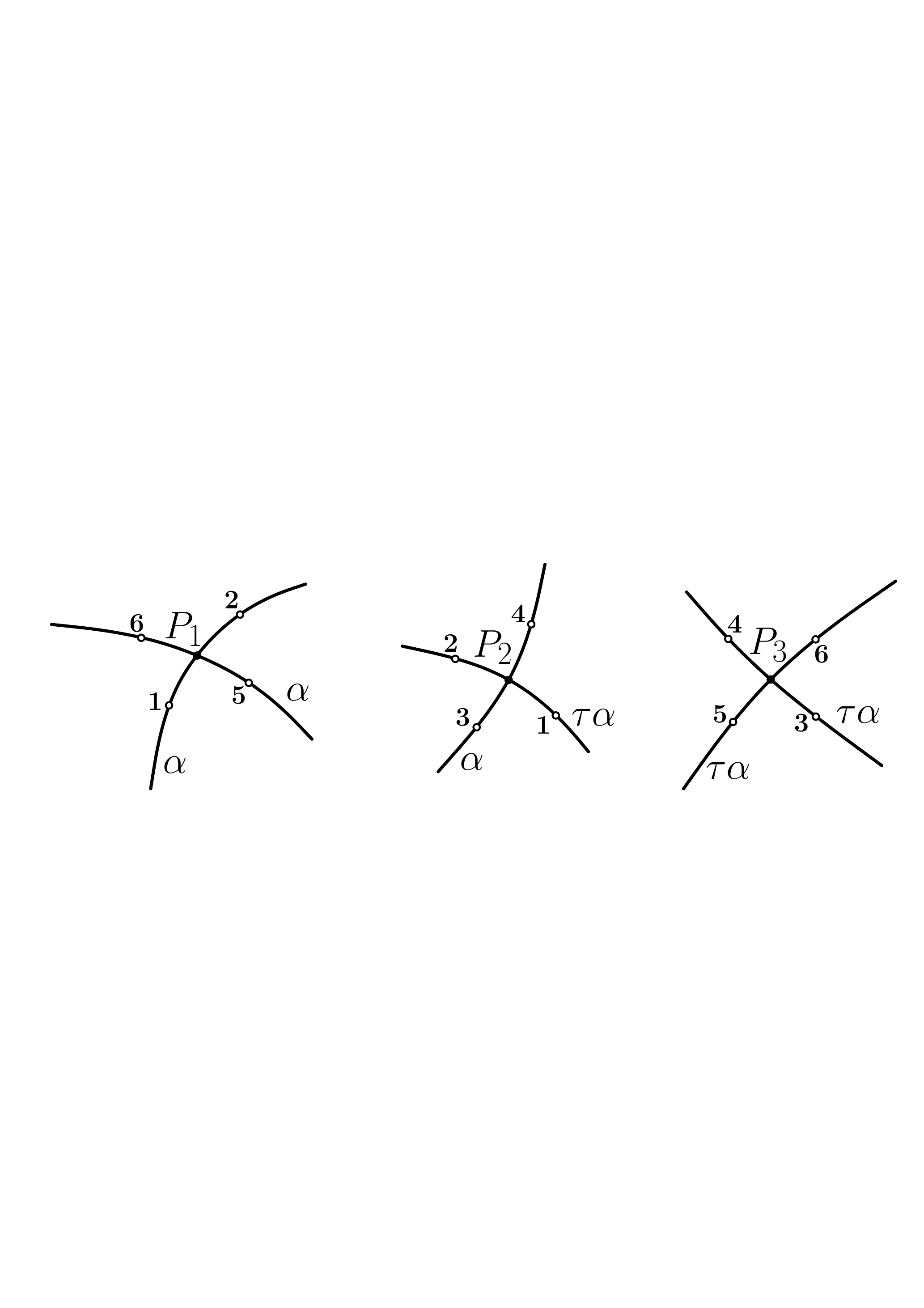}
\label{fig:triple-point-I1} }\\
\subfigure[$\aa^3=1$]{\includegraphics[width=0.8\textwidth]{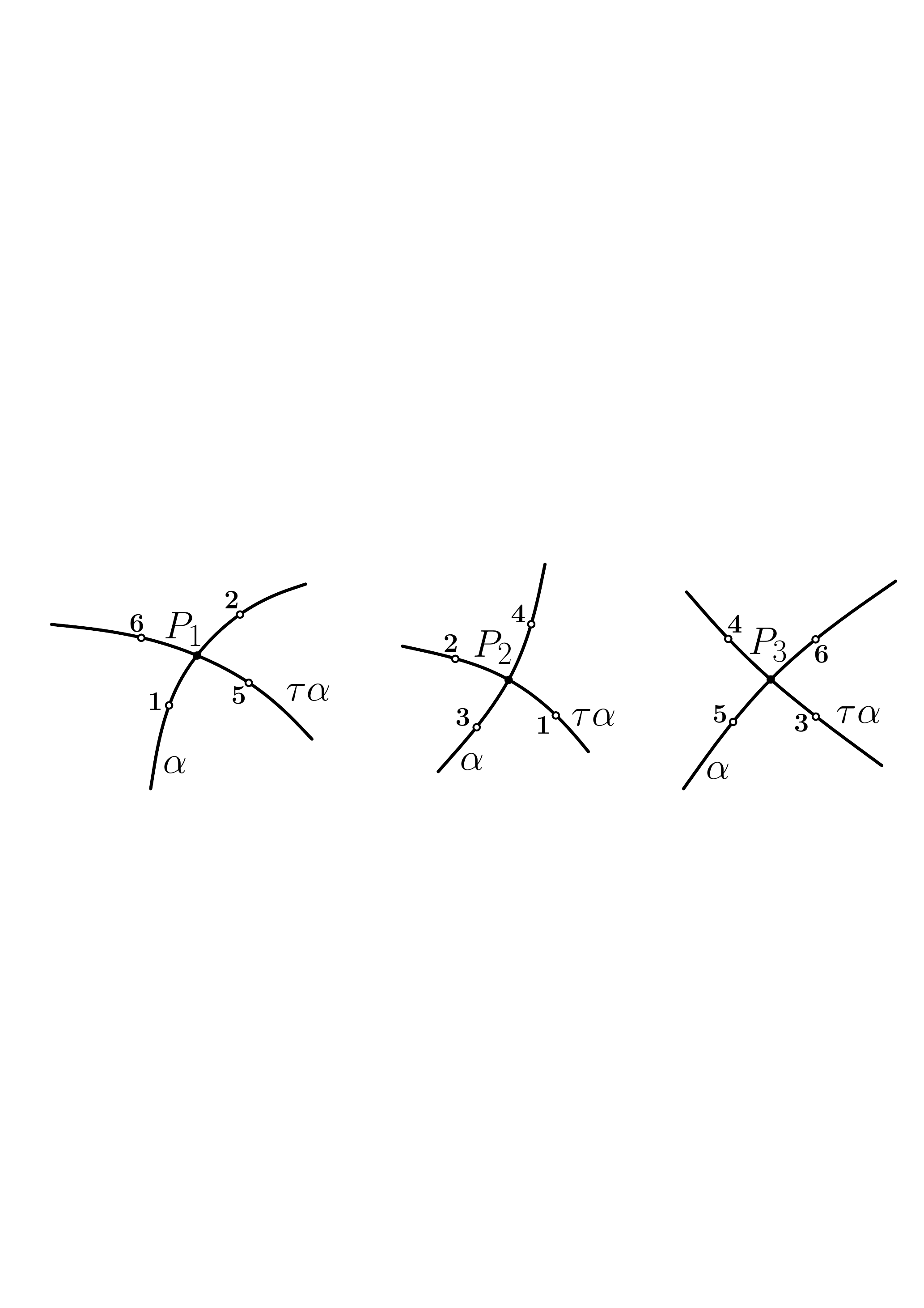}
\label{fig:triple-point-I2} }\\
\subfigure[$\aa\bb\aa^{-1}=1\iff\bb=1$]{\includegraphics[width=0.8\textwidth]
{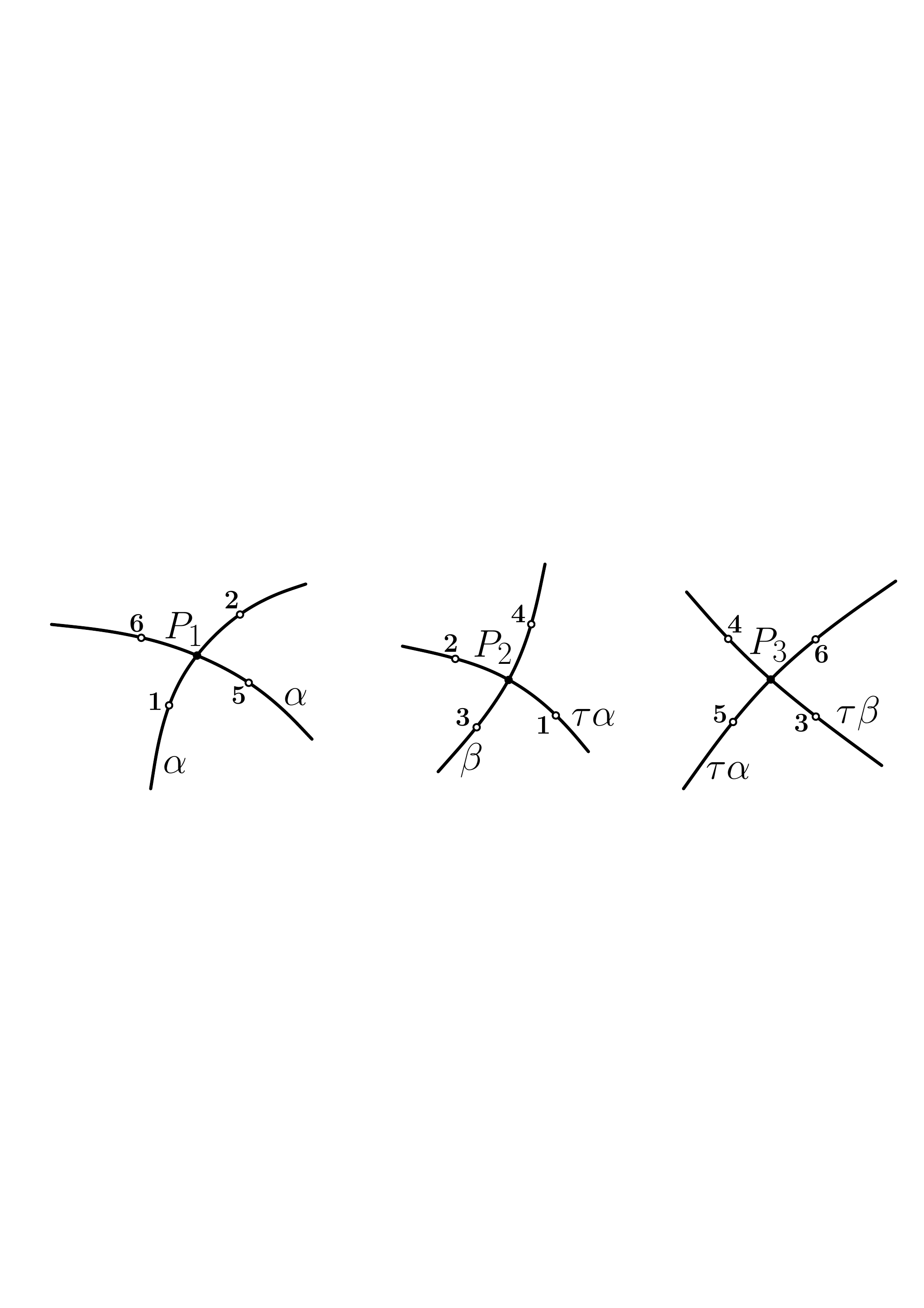}\label{fig:triple-point-II1} }\\
\subfigure[$\aa\bb\aa=1\iff
\bb=\aa^{-2}$]{\includegraphics[width=0.8\textwidth]{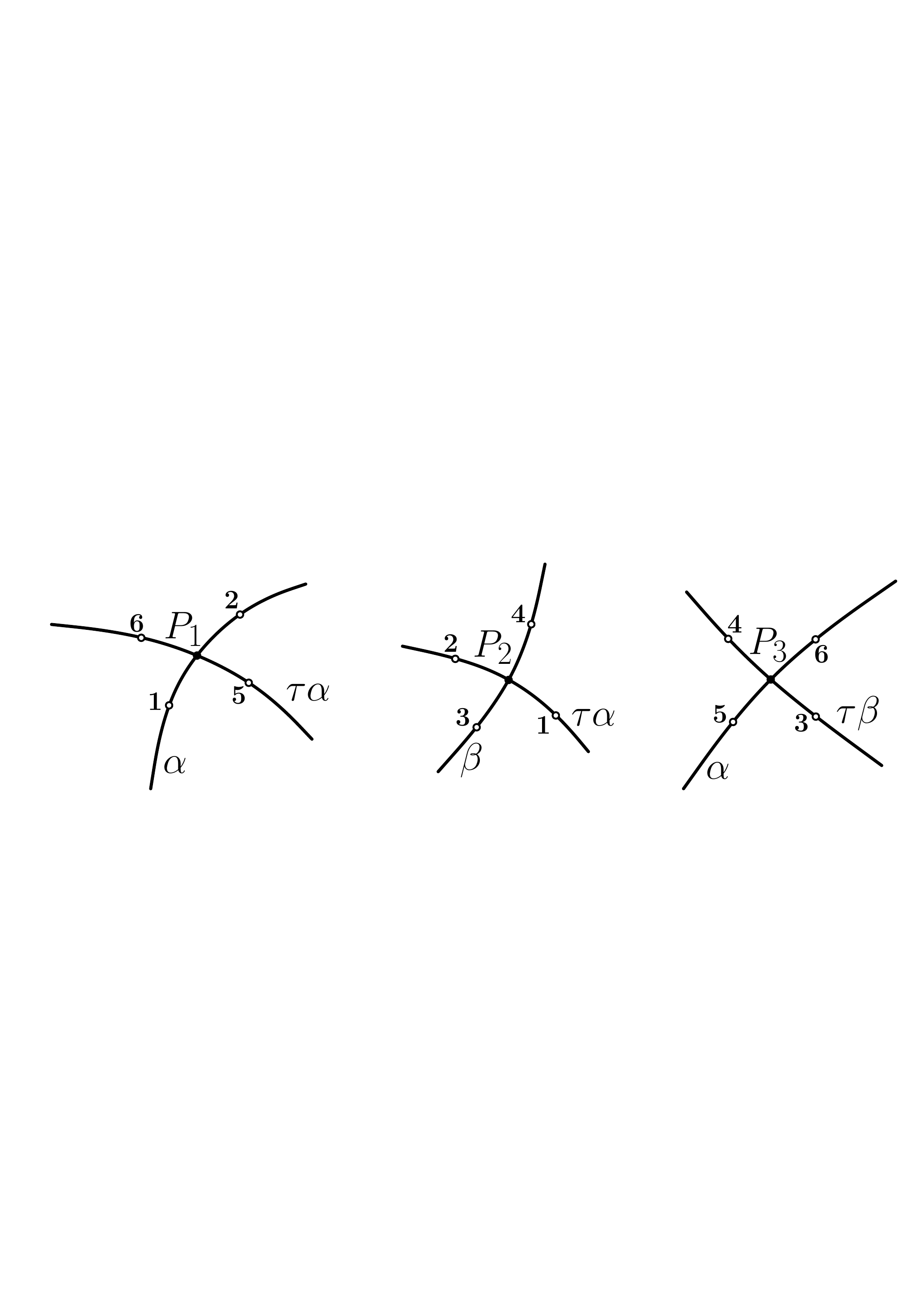}
\label{fig:triple-point-II2} }\\
\subfigure[$\aa\bb\cc=1$]{\includegraphics[width=0.8\textwidth]{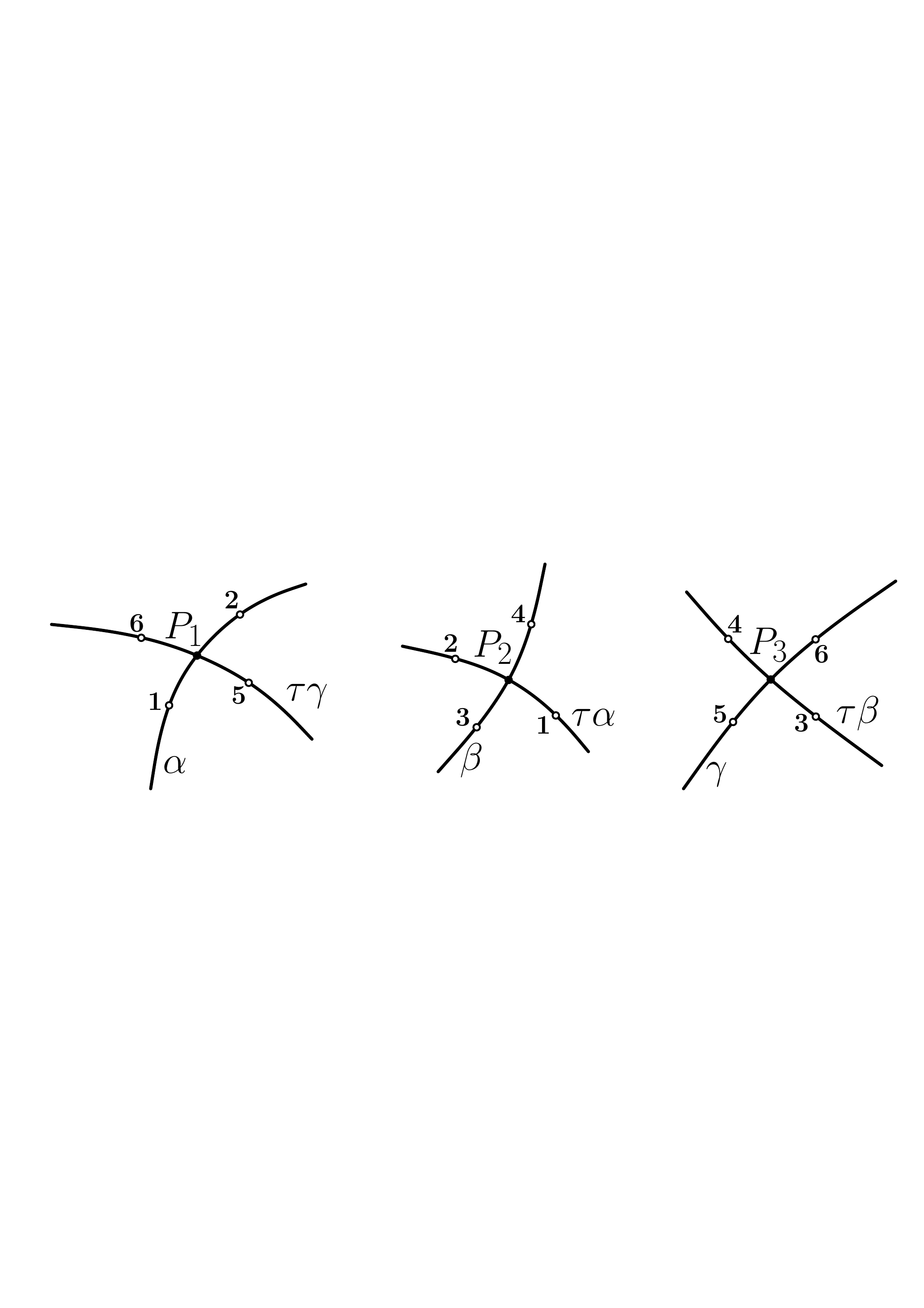}
\label{fig:triple-point-III} }\\
\caption{the curves of $\D$ around a triplet of double
points}\label{fig:triple-points}
\end{figure}

\section{3-manifolds with Montesinos complexity 4. Proof of Theorem
\ref{thm:H1-complexity-4}}\label{sec:complexity-4}

Let $\S$ be a filling Dehn sphere on $M$, $f$ a
parametrization of $\S$, and $\D$ the Johansson diagram of $\S$.

\begin{lemma}\label{lem:no-type-I-triple-point}
 If $\S$ has three double curves and four triple points, it has no type I triple
point.
\end{lemma}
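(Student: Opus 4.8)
The plan is to work entirely inside the Johansson diagram $\D$ and to force a contradiction from the \emph{rigid local shape} of each triplet together with the even intersection property. Since $\S$ has three double curves, $\D$ consists of the six curves $\aa,\ta,\bb,\tb,\cc,\tc$, and since $\S$ has four triple points $\D$ has exactly $12$ double points. First I would record, for each kind of triple point, precisely which pairs among these six curves meet at its triplet. Reading Figures~\ref{fig:triple-point-I1}--\ref{fig:triple-point-III} together with the sister bookkeeping around a triple point, I expect the following. A type~III triplet produces one crossing for each unordered pair of the three double curves and uses \emph{both} sisters of every double curve exactly once (schematically $\{\aa\bb,\ta\cc,\tb\tc\}$). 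A type~II triplet with doubled curve $A$ and second curve $B$ produces one double point inside $\{\aa,\ta\}$ (a self--crossing in case II.1, the crossing $\aa\cap\ta$ in case II.2) together with the two crossings of one sister of $A$ with $\bb$ and with $\tb$. A type~I triplet lives entirely inside $\{\aa,\ta\}$.

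Next I would pin down the global combinatorics forced by filling. Since $\S$ is filling, $\D$ is connected, so the graph on the six curves whose edges are the intersecting pairs is connected and hence has at least five edges. By the even intersection property each intersecting pair carries an even number, so at least two, of the twelve double points; therefore there are at most six intersecting pairs, and the number $s$ of self--crossings is even with $s\le 2$.

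Now I would assume, for contradiction, a type~I triple point on the double curve $A$. In the type~I.1 subcase its triplet spends the entire budget $s=2$, while in the type~I.2 subcase the three $\aa\cap\ta$ double points force $s=0$ and make $\{\aa,\ta\}$ absorb four double points; in either subcase the bounds above collapse to \emph{exactly five} intersecting pairs forming a spanning tree of the six curves, each pair carrying exactly two double points (with $\{\aa,\ta\}$ carrying four in the type~I.2 subcase). The even intersection property then forces an extra $\aa\cap\ta$ crossing, and the only triplet able to supply a lone $\aa\cap\ta$ crossing within this count is a type~II.2 point $Q$ with doubled curve $A$; let $B$ be its second curve. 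The point $Q$ also contributes $\ta\cap\bb$ and $\aa\cap\tb$, each of which must be completed to an even total by the two remaining triple points $R_1,R_2$. Because the spanning tree must still reach the third double curve $C$, connectivity forces $R_1=\{\ta\bb,f,g\}$ and $R_2=\{\aa\tb,f,g\}$ for two $C$--involving pairs $f,g$ shared by both.

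The contradiction, which I expect to be the heart of the argument, now comes from the rigidity of type~III. Each of $R_1,R_2$ contains a single $A$--$B$ crossing ($\ta\bb$, respectively $\aa\tb$) and two crossings involving $C$; the only admissible triplet shape is type~III, since a sister pair such as $\cc\tc$ would force a doubled curve $C$ whose accompanying crossings must all involve $C$, contradicting the $A$--$B$ crossing present. But a type~III triplet uses each double curve's two sisters once each: in $R_1$ the $A$--$B$ crossing uses $\ta$, so its $A$--$C$ crossing must use $\aa$, whereas in $R_2$ the $A$--$B$ crossing uses $\aa$, forcing the $A$--$C$ crossing to use $\ta$. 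As $R_1$ and $R_2$ share the same pairs $f,g$, the common $A$--$C$ crossing would have to use both $\aa$ and $\ta$ (and the common $B$--$C$ crossing both $\bb$ and $\tb$), which is impossible. Hence no type~I triple point can occur. The main obstacle I anticipate is not the counting but establishing the two pieces of local rigidity cleanly—the exact sister pattern of a type~III triplet and the forced appearance of the type~II.2 companion $Q$—so that the spanning--tree bookkeeping leaves no escape.
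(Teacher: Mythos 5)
Your proposal is correct and follows essentially the same route as the paper's proof: both arguments use the even intersection property to force an extra $\aa\cap\ta$ crossing outside the type I triplet, rule out a second type I triplet by a connectivity/counting argument on the remaining double points, identify the companion triplet as type II.2, and then derive the contradiction from the fact that the last two triplets cannot both realize the required crossings with the correct sister curves. Your version merely makes the edge-count saturation and the sister-rigidity of type III triplets more explicit than the paper does.
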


\begin{proof}
Let $P$ be a type I triple point of $\S$, and let $Q,R,S$ be the other three
triple points 
of $\S$. Let $\bar{\aa}$ be the double curve of $\S$ through $P$, and
let $\aa,\ta$ be the curves of $\D$ that are projected onto $\bar{\aa}$ under
$f$. The triple point $P$ can be of type I.1 or of type I.2, but in both
cases there is an odd number of intersection points 
between $\aa$ and $\ta$ in the triplet of $P$ (see
Figures~\ref{fig:triple-point-I1} and~\ref{fig:triple-point-I2}). By the even
intersection property, there must be another 
intersection point $Q_1$ of $\aa$ with $\ta$ out of the triplet of $P$. We can
assume that $Q_1$ belongs to the triplet of $Q$.

The unique types 
of triple points where a curve of $\D$ intersects its sister curve are types I
and II.2 (see 
Figure~\ref{fig:triple-points}).
If $Q$ is a type II.2 triple point, after renaming the curves $\bb,\tb,\cc,\tc$
if necessary, we can assume also that the curves $\aa,\ta,\bb,\tb$ intersect at
the triplet of $Q$ as in Figure~\ref{fig:triple-point-II2}. As $\D$ is
connected, by the even intersection property, among the remaining six double
points of $\D$ lying in the triplets of $R$ and $S$ there must be at least:
\begin{enumerate}
 \item another intersection point of $\ta$ with $\bb$;
 \item another intersection point of $\aa$ and $\tb$;
 \item renaming $\cc$ and $\tc$ if necessary, two intersection points of $\cc$
with one of the curves $\aa,\ta,\bb,\tb$; and
 \item two intersection points of $\tc$ with one of the curves
$\aa,\ta,\bb,\tb,\cc$.
\end{enumerate}
It is not difficult to check that with this restrictions, each of the remaining
two triplets of 
$\D$ must involve the six curves of $\D$. If the triplet of $R$ contains a point
of $\ta\cap\bb$, for example, after renaming $\cc,\tc$ if necessary we can
assume that the intersection of the curves of $\D$ around $f^{-1}(R)$ is as that
of Figure~\ref{fig:triple-point-III}. Then, $f^{-1}(S)$ must contain: an
intersection point of $\aa$ and $\tb$, an intersection point of $\gamma$ and
$\tb$ and an intersection point of $\aa$ and $\tc$, which is impossible because
there fail to appear $\ta$ and $\bb$. This means that $Q$ cannot be a type II.2
triple point.

Therefore, $Q$ is a type I triple point. By the even intersection property, for
creating a connected diagram with $\aa\cup\ta$ and $\bb,\tb,\cc,\tc$ we need to
introduce at least eight double points, but there are only six remaining double
points in $\D$. This leads to a contradiction, and so there cannot be a triple
point of type I.
\end{proof}

\begin{proof}[Proof of Theorem \ref{thm:H1-complexity-4}]
As $\pD$ is isomorphic to $\pi_ 1(M)$, the abelianized $\aD$ of $\pD$ is
isomorphic to $H_1(M,\Z)$. We will work with $\aD$, and we will show that it
cannot be isomorphic to $\Z_3\oplus\Z_3$, the abelianized group of
$\Z_3\ast\Z_3$.
We will use the same names for the generators
of $\aD$ and $\pD$, and we will give the same names (type I.1, I.2, II.1, II.2
and III) to the abelianized relations in $\aD$ as their original relations in
$\pD$.
By Theorem~\ref{thm:at-most-two-curves}, we can restrict our analysis to
the case when $\S$ has three double curves, and by
Lemma~\ref{lem:no-type-I-triple-point}, in this case there is no type I triple
point in $\S$. Therefore all the relations in $\pD$ are of type II or III.

If $\pD$ has a type III relation, we can assume that in $\aD$ holds the relation
\begin{equation}\label{eq:abelianized-type-III}
\aa+\bb+\cc=0\,. 
\end{equation}

If all the relations of $\aD$ are equivalent to \eqref{eq:abelianized-type-III},
then $\aD$ is isomorphic to $\Z\oplus\Z$.

If $\aD$ has another type III relation not equivalent to
\eqref{eq:abelianized-type-III}, after renaming the curves of $\D$ we can assume
that this relation is
$$\aa-\bb-\cc=0\,.$$
This relation, together with \eqref{eq:abelianized-type-III} gives $2\aa=0$. Is
$\aa$ is trivial in $\aD$, then by \eqref{eq:abelianized-type-III} it is
$\bb=-\cc$ and $\aD$ is cyclic. If $\aa$ is not trivial, $\aD$ has an element of
order two, and so $\aD$ cannot be isomorphic to $\Z_3\oplus\Z_3$.

If $\aD$ has a type II relation, we
can assume that it is $\bb=0$ or $\bb=-2\aa$. In any case, this relation,
together with \eqref{eq:abelianized-type-III}, implies that $\aD$ is cyclic.

Thus, if $\aD$ has a type III relation, it cannot be isomorphic to 
$\Z_3\oplus\Z_3$.

\medskip
Assume now that all the relators are of type II.
If there is a type II.1 relation, we can assume that the relation $\bb=0$
holds in $\aD$ (see Figure~\ref{fig:triple-point-II1}). 
We have that:
\begin{itemize}
 \item If the remaining three relations are equivalent to $\bb=0$, $\aD$ is a
free abelian group of rank two.
 \item If there is a relation of the type $\aa=0$, $\aa=\pm 2\cc$, $\aa=\pm
2\bb$, perhaps interchanging $\aa$ with $\cc$, the group $\aD$ is cyclic.
 \item If there is a relation of the type $\bb=\pm 2\aa$, then
$\aD$ is cyclic (if $a=0$) or it has elements of order two (if $a\neq 0$). The
same holds if we have $\bb=\pm 2\cc$.
\end{itemize}

If the four relators are of type II.2, we can assume that one of them gives the
relation $\bb=2\aa$. Then
\begin{itemize}
 \item If the remaining three relations are equivalent to $\bb=2\aa$, then $\aD$
is free abelian of rank two.
 \item If there is a relation of the type $\aa=\pm 2\cc$, the group $\aD$ is
cyclic.
 \item If there is a relation of the type $\cc=\pm 2\aa$, $\aD$ is cyclic.
 \item If $\bb=\pm 2\cc$ holds, we have that $2\aa=\pm 2\cc$. If it is
$2\aa=2\cc$,
by taking $\aa,\aa-\cc$ as generators of $\aD$ we have that $\aD$ must be cyclic
(if $\aa-\cc=0$) or it must contain elements of order two (if $\aa-\cc\neq 0$).
The same argument can be applied when the relation $2\aa=-2\cc$ holds in $\aD$.
 \item If there is a relation of the type $\cc=\pm2\bb$, $\aD$ is cyclic.
 \item If there's no type II.2 relation involving $\cc$, the generator $\cc$
of $\aD$ is free and so $\aD$ has rank at least 1.
\end{itemize}

The proof is complete.
\end{proof}

\section{Comments}\label{sec:comments}

With a bit more effort, we can extend the techniques of the proofs of Theorems
\ref{thm:at-most-two-curves} and \ref{thm:H1-complexity-4} for obtaining a
list of candidates for fundamental groups of manifolds with Montesinos
complexity up
to 4. This will be made in a subsequent paper, where the following
theorem is proved:
\begin{theorem}\label{thm:fund-groups-complexity-4}
If the $3$-manifold $M$ has Montesinos complexity $mc(M)\leq 4$, the
fundamental group of $M$, if it is not trivial, is isomorphic to either
$\Z$, $\Z_q$ with $q\leq 6$, $\Z\oplus\Z$ or to the groups:
\begin{align*}
	G_1=&\langle a,b|ab^{-1}=ba\rangle\,,
	\\G_2=&\langle a,b|a^2=b^2\rangle\,.
\end{align*}

\end{theorem}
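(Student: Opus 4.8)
The plan is to leverage the length bound of Section~\ref{sec:Dehn-spheres-Johansson-diagrams} and Theorem~\ref{thm:at-most-two-curves} to collapse the problem to a single new configuration, and then to rerun the case analysis of Theorem~\ref{thm:H1-complexity-4} at the level of $\pD$ rather than $\aD$. First I would observe that $mc(M)\le 4$ means $M$ carries a filling Dehn sphere $\S$ with an even number $p\le 4$ of triple points; since condition \textbf{(F2)} forbids $p=0$ (an embedded sphere is not a disk), we have $p\in\{2,4\}$. When $p=2$ the counting Lemma of Section~\ref{sec:Dehn-spheres-Johansson-diagrams} gives at most two double curves, so Theorem~\ref{thm:at-most-two-curves} already yields $\pi_1(M)\in\{1,\Z,\Z_2,\dots,\Z_6\}$. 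When $p=4$ the same Lemma gives at most three double curves, and the subcase of at most two is again disposed of by Theorem~\ref{thm:at-most-two-curves}. Thus the only genuinely new situation is a filling Dehn sphere with exactly four triple points and exactly three double curves.

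In that situation $\pD$ has three generators $\aa,\bb,\cc$ (with $\ta=\aa^{-1}$, $\tb=\bb^{-1}$, $\tc=\cc^{-1}$) and four triple-point relators, and by Lemma~\ref{lem:no-type-I-triple-point} each relator is of type II.1, II.2 or III. I would then organize the argument along the same branches as the proof of Theorem~\ref{thm:H1-complexity-4} --- according to whether a type III relation is present, whether all relations are of type II with at least one of type II.1, or whether all four are of type II.2 --- but carry the full non-abelian relations through Tietze transformations. A type III relator $\aa\bb\cc=1$ lets me eliminate $\cc=\bb^{-1}\aa^{-1}$ and drop to two generators; a type II.1 relator kills a generator; a type II.2 relator $\aa\bb\aa=1$ gives $\bb=\aa^{-2}$. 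Each branch then reduces to a one- or two-generator presentation. The cyclic and free outcomes $\Z_q$ and $\Z$ survive exactly as in the abelianized count, and the genuinely redundant branches give $\Z\oplus\Z$. The branches that in $\aD$ only signalled $2$-torsion (abelianization $\Z\oplus\Z_2$) refine non-abelianly to the two exceptional groups: two type III relators with opposite orientations, say $\aa\bb\cc=1$ and $\aa\bb^{-1}\cc^{-1}=1$, yield after eliminating $\cc$ the relation $\bb^{-1}\aa\bb=\aa^{-1}$, i.e. the Klein-bottle group $G_1=\langle a,b\mid ab^{-1}=ba\rangle$; while two type II.2 relators sharing a generator, $\bb=\aa^{-2}$ and $\bb=\cc^{-2}$, collapse to $\langle\aa,\cc\mid\aa^2=\cc^2\rangle=G_2$.

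The step I expect to be the main obstacle is not the group theory but the combinatorial census of which quadruples of relator types (and which orientations within each type) can actually occur in an honest filling Johansson diagram. Algebraically, three generators with four relators of the allowed forms would produce more groups than the claimed list, so --- exactly as in the delicate bookkeeping of Lemma~\ref{lem:no-type-I-triple-point} --- I would use the even intersection property together with the connectedness of the diagram (six curves meeting in twelve double points grouped into four triplets) to rule out the combinations that cannot be realized, and to fix the admissible sign patterns that determine the non-abelian form of each relator. Carrying out this census in full, and in particular verifying that the only non-abelian groups that survive it are $G_1$ and $G_2$, is where the real work lies; once it is complete, assembling the final list is immediate.
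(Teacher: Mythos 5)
First, a point of reference: the paper does not prove Theorem~\ref{thm:fund-groups-complexity-4} at all. It only announces the statement, defers the proof to the companion paper \cite{R-AL2}, and describes the strategy in one sentence: a combinatorial study of diagram groups with at most three generators and four triple point relators, constrained by (i) connectedness, (ii) the even intersection property, and (iii) the symmetry between sister curves. Your outline coincides with that announced strategy. The reduction to $p\in\{2,4\}$, the use of the double-curve bound and Theorem~\ref{thm:at-most-two-curves} to isolate the single new case of four triple points and three double curves, the appeal to Lemma~\ref{lem:no-type-I-triple-point} to restrict the relators to types II.1, II.2 and III, and the identification of $G_1$ and $G_2$ as the non-abelian refinements of the $2$-torsion branches in the proof of Theorem~\ref{thm:H1-complexity-4} are all correct, and your two sample computations (a pair of type III relators with opposite signs giving $\bb^{-1}\aa\bb=\aa^{-1}$, and two type II.2 relators $\bb=\aa^{-2}$, $\bb=\cc^{-2}$ giving $\aa^2=\cc^2$) check out.

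The genuine gap is the one you flag yourself, and it is not a peripheral verification but the entire content of the theorem: the census of which quadruples of relators are realizable by an honest filling Johansson diagram is missing. Without it the list cannot be shown to be exhaustive. To see that this is not a formality, note that four relators all freely equivalent to a single type III relator $\aa\bb\cc$ would leave $\pD$ a free group of rank two, which is not on the list (the abelianized argument of Theorem~\ref{thm:H1-complexity-4} only sees $\Z\oplus\Z$ here); similarly, various sign patterns of mixed type II.2 and type III relators yield one-relator two-generator groups beyond $G_1$ and $G_2$ unless they are excluded combinatorially. You propose to run the census using only connectedness and the even intersection property, but the paper explicitly lists a third constraint --- the sister-curve symmetry, that a complete travel along sister curves crosses the same number of double points --- as one of the restrictions the proof relies on, and you do not invoke it; this suggests that the census as you envision it may not close. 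In short: the skeleton is right and matches the authors' declared plan, but the decisive combinatorial work that distinguishes this theorem from a routine corollary of Sections~\ref{sec:diagram-group} and~\ref{sec:complexity-4} is left undone.
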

The proof of this theorem relies in a combinatorial study of the groups having
at most three generators and four triple point relations. The combinatorial
properties of the filling Johansson diagrams: (i) connectedness; (ii) even
intersection
property; and (iii) the \emph{symmetry} between sister curves (when performing a
complete travel along sister curves we must cross the same number of double
points); impose strong combinatorial restrictions on the diagram
groups. For an arbitrary group
$\mathscr{G}$, we can wonder if there exists a
\emph{Haken presentation} of $\mathscr{G}$: a presentation similar to those of
the diagram groups (generators and triple point relations) of filling Johansson
diagrams with the same combinatorial restrictions as those imposed by properties
(i), (ii) and (iii) above. For a
given group $\mathscr{G}$ having a Haken presentation we can define its
\emph{Haken complexity} $hc(\mathscr{G})$ as the minimal number of triple point
relators among all its Haken presentations. Of
course, the fundamental group of a $3$-manifold $M$ has a Haken presentation
and it is always $hc(\pi_1(M))\leq mc(M)$. The
question: \textit{is it always $hc(\pi_1(M))= mc(M)$?}, naturally
arises. A possitive answer to this question would be highly nontrivial to prove
because, in particular, it would imply a solution of the Poincar\'e Conjecture.
We don't know
if all the fundamental groups of the list of Theorem
\ref{thm:fund-groups-complexity-4}
actually occur as fundamental groups of manifolds with Montesinos complexity
four. We have examples of filling Johansson diagrams with 4
triple points whose diagram groups are $\Z$, $\Z\oplus\Z$ or $\Z_q$ with $q\leq
5$, but we
have found no examples for $\Z_6$, $G_1$ or $G_2$.

The definition of filling Dehn spheres is naturally extended to \emph{filling
Dehn surfaces}, which are arbitrary compact immersed surfaces verifying the
conditions \textbf{(F1)}, \textbf{(F2)} and \textbf{(F3)} of Section
\ref{sec:Dehn-spheres-Johansson-diagrams}.
If we require the Dehn surface to be an immersed orientable surface of genus
$g$, we can talk about \emph{genus $g$ filling Dehn surfaces}.
In \cite{Rthesis} it is defined the \emph{triple point spectrum}
of a $3$-manifold $m$ as the sequence
$$\mathscr{T}(M)=(t_0(M),t_1(M),t_2(M)\ldots)\,,$$
where for all $g=0,1,2,\ldots$ the number $t_g(M)$ is the \emph{genus $g$
triple point number} of $M$, i.e. the minimal number of triple points among all
genus $g$ filling Dehn surfaces of $M$. Note that $mc(M)=t_0(M)$. A simple
surgery operation shows that the genus $g$ triple point numbers verify the
inequality $t_{g+1}(M)\leq t_g(M)+2$ for all $g=0,1,2,\ldots\,$, but the
equality does not necessarily hold because, for example, there are filling Dehn
tori with
just one triple point (see \cite{RHomotopies,Rthesis}). Apart from
this inequality, nothing is known about the triple point spectrum of any
$3$-manifold. A first question to answer in this context is if the triple
point spectrum of $S^3$ is $(2,4,6,\ldots)$.

All these numbers, as Amendola's surface-complexity, can be used to
give a census of
$3$-manifolds with increasing complexity. It should be interesting to
investigate if it can be designed an efficient computer program for giving a
list of $3$-manifolds with bounded Montesinos complexity,
as it has
been done for the Matveev complexity \cite{Matveev}, for example.

\end{document}